\newtheorem{thm}{Theorem}[section]
\newtheorem{cor}[thm]{Corollary}
\newtheorem{lem}[thm]{Lemma}
\newtheorem{exam}[thm]{Example}
\numberwithin{equation}{section}
\begin{document}

\title{generalized Cline's formula for g-Drazin inverse in a ring}

\author{Huanyin Chen}
\author{Marjan Sheibani Abdolyousefi}
\address{
Department of Mathematics\\ Hangzhou Normal University\\ Hang -zhou, China}
\email{<huanyinchen@aliyun.com>}
\address{Women's University of Semnan (Farzanegan), Semnan, Iran}
\email{<sheibani@fgusem.ac.ir>}

\subjclass[2010]{15A09, 47A11.} \keywords{Cline's formula; Generalized Drazin inverse; Drazin inverse; group inverse; Banach algebra.}

\begin{abstract} In this paper, we give a generalized Cline's formula for the generalized Drazin inverse. Let $R$ be a ring, and let $a,b,c,d\in R$ satisfying $$\begin{array}{c}
(ac)^2=(db)(ac), (db)^2=(ac)(db);\\
b(ac)a=b(db)a, c(ac)d=c(db)d.
\end{array}$$
Then $ac\in R^{d}$ if and only if $bd\in R^{d}$. In this case,
$(bd)^{d}=b((ac)^{d})^2d.$ We also present generalized Cline's formulas for Drazin and group inverses.
Some weaker conditions in a Banach algebra are also investigated. These extend the main results of Cline's formula on g-Drazin inverse of Liao, Chen and Cui (Bull. Malays. Math. Soc., {\bf 37}(2014), 37-42), Lian and Zeng (Turk. J. Math., {\bf 40}(2016), 161-165) and Miller and Zguitti (Rend. Circ. Mat. Palermo, II. Ser., {\bf 67}(2018), 105-114). As an application, new common spectral property of bounded linear operators over Banach spaces are obtained.\end{abstract}
\maketitle

\section{Introduction}

Let $R$ be an associative ring with an identity. The commutant of $a\in R$ is defined by $comm(a)=\{x\in
R~|~xa=ax\}$. The double commutant of $a\in R$ is defined by $comm^2(a)=\{x\in R~|~xy=yx~\mbox{for all}~y\in comm(a)\}$. An element $a\in R$ has g-Drazin inverse (i.e., generalized Drazin inverse) in case there exists $b\in R$ such that $$b=bab, b\in comm^2(a), a-a^2b\in R^{qnil}.$$ The preceding $b$ is unique if exists, we denote it by $a^{d}$. Here, $R^{qnil}=\{a\in R~|~1+ax\in R^{-1}~\mbox{for
every}~x\in comm(a)\}$. For a Banach algebra $\mathcal{A}$ it is well known
 that $$a\in \mathcal{A}^{qnil}\Leftrightarrow
\lim\limits_{n\to\infty}\parallel a^n\parallel^{\frac{1}{n}}=0\Leftrightarrow 1-\lambda a\in \mathcal{A}^{-1}~\mbox{for any scarlar}~\lambda.$$

Let $a,b\in R$. The Cline's formula for g-Drazin inverse stated that $ab\in R^d$ if and only if $ba\in R^d$ ad $(ba)^d=b[(ab)^d]^2a$ (see~\cite[Theorem 2.1]{LC}). Cline's formula plays an important role in the generalized inverse of matrix and operator theory (~\cite{G,KT,Mi,S,Y,Z1,Z2}). In ~\cite[Theorem 2.3]{L}, Lian and Zeng proved the generalized Cline's formula to the case when $aba=aca$. In ~\cite[Theorem 3.2]{Mi}, Miller and Zguitti generalized the Cline's formula under the condition
$acd=dbd$ and $dba=aca$. The motivation of this paper is to extend the Cline's formula for g-Drazin inverse to a wider case.

In Section 2, we present a new generalized Cline's formular for g-Draziin inverse. We also prove generalized Cline's formulas for Drazin and group inverses.
Let $R$ be a ring, and let $a,b,c,d\in R$ satisfying $$\begin{array}{c}
(ac)^2=(db)(ac), (db)^2=(ac)(db);\\
b(ac)a=b(db)a, c(ac)d=c(db)d.
\end{array}$$
Then $ac\in R^{d}$ if and only if $bd\in R^{d}$. In this case,
$(bd)^{d}=b((ac)^{d})^2d.$ This improves the main results of Cline's formula on g-Drazin inverse of Liao, Chen and Cui (~\cite[Theorem 2.1]{LC}), Lian and Zeng (~\cite[Theorem 2.3]{L}) and Miller and Zguitti (~\cite[Theorem 3.2]{Mi}). In Section 3, we investigate some weaker conditions in a Banach algebra under which the generalized Cline's formula holds. We prove that the preceding condition "$b(ac)a=b(db)a, c(ac)d=c(db)d$" can be dropped in a Banach algebra. Finally, in Section 4, we apply the generalized Cline's formula to common spectral property of bounded linear operators in a Banach space.

Throughout the paper, all rings are associative with an identity and all Banach algebras are complex. We use $\mathcal{A}^{rad}$ to denote the Jacobson radical of $\mathcal{A}$. The notations $\mathcal{A}^{d}, \mathcal{A}^{D}, \mathcal{A}^{\#}$ and $\mathcal{A}^{\ddag}$ stand for the sets of all g-Drazin, Drazin, group and p-Drazin invertible elements, respectively.

\section{generalized Cline's Formula}

For any elements $a,b$ in a ring $R$, it is well known that $ab\in R^{qnil}$ if and only if $ba\in R^{qnil}$ (see~\cite[Lemma 2.2]{L}). We start with the following generalization.

\begin{lem} Let $R$ be a ring, and let $a,b,c,d\in R$ satisfying $$\begin{array}{c}
(ac)^2=(db)(ac), (db)^2=(ac)(db);\\
b(ac)a=b(db)a, c(ac)d=c(db)d.
\end{array}$$ Then $ac\in R^{qnil}$ if and only if $bd\in R^{qnil}$.\end{lem}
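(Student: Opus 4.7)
My plan is to reduce the two implications to the classical Jacobson--Cline lemma for the $qnil$ radical (\cite[Lemma~2.2]{L}) by means of an explicit Sylvester-type inversion formula. The algebraic conditions $(ac)^2=(db)(ac)$ and $(db)^2=(ac)(db)$ will generate the ``power identities'', while the finer hypotheses $b(ac)a=b(db)a$ and $c(ac)d=c(db)d$ feed a pair of truncated commutation relations that let the qnil hypothesis on $ac$ be cashed in.

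Step~1 (structural identities). A routine induction from the two product identities gives $(ac)^n=(db)^{n-1}(ac)$ and $(db)^n=(ac)^{n-1}(db)$ for all $n\ge 1$; in particular, writing $\delta:=ac-db$, we get $\delta(ac)=\delta(db)=0$, $(ac)\delta=(db)\delta$ and $\delta^2=0$. Combining this with the Cline identity $b(db)^n=(bd)^nb$ and the third hypothesis yields, by a second induction, the sandwich formula
\[
 b(ac)^n a \;=\; b(db)^n a \;=\; (bd)^n (ba), \qquad n\ge 1,
\]
together with $(bd)^{n+1}=b(db)^nd=b(ac)^{n-1}(db)d$ for $n\ge 1$.

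Step~2 (Jacobson reduction). Assume $ac\in R^{qnil}$ and fix $y\in \mathrm{comm}(bd)$. The associativity identity $(1+(bd)y)\,b=b\,(1+dyb)$ combined with the classical Jacobson formula shows that $1+(bd)y$ is invertible as soon as $1+dyb$ is; concretely, if $V:=(1+dyb)^{-1}$ exists then $(1+(bd)y)^{-1}=1-bV(dy)$. So the problem reduces to exhibiting $(1+dyb)^{-1}\in R$. Here hypotheses (3) and (4) enter through the ``twisted commutation'' identities
\[
 (dyb)\,\delta\,a=0, \qquad c\,\delta\,(dyb)=0,
\]
obtained respectively by multiplying $b(ac)a=b(db)a$ on the left by $dy$ (using $y(bd)=(bd)y$ to move $y$ through $bd$) and multiplying $c(ac)d=c(db)d$ on the right by $yb$.

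Step~3 (explicit inverse and converse). An induction using Step~1 gives $(dyb)^n=dy^n(bd)^{n-1}b$, and then $(bd)^{n-1}=b(ac)^{n-3}(db)d$ allows the formal geometric series for $(1+dyb)^{-1}$ to be regrouped into the closed form $1-dyb+d\,W\,bd$, where $W$ must solve the Sylvester-type equation
\[
 W+(ac)\,W\,y\;=\;dy^2.
\]
The qnil hypothesis on $ac$, together with the twisted commutations of Step~2, produces an element $\eta\in \mathrm{comm}(ac)$ built from $y,d,b$ such that $W:=(1+(ac)\eta)^{-1}(dy^2)$ is a bona fide element of $R$; a direct computation, in which all would-be infinite series collapse thanks to $\delta(ac)=\delta(db)=0$ and the sandwich formula of Step~1, then verifies $(1+dyb)(1-dyb+d\,W\,bd)=1$, and chaining back through Step~2 yields $bd\in R^{qnil}$. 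The converse implication is handled by the symmetry $(a,b,c,d)\mapsto(d,c,b,a)$, which preserves the entire list of hypotheses, interchanges $ac\leftrightarrow db$ and $bd\leftrightarrow ca$, and is combined with the classical equivalences $ac\in R^{qnil}\!\Leftrightarrow\! ca\in R^{qnil}$ and $bd\in R^{qnil}\!\Leftrightarrow\! db\in R^{qnil}$ to close the loop. The main obstacle is the production of the commuting element $\eta\in \mathrm{comm}(ac)$ in Step~3: in a Banach algebra the solution $W$ is a convergent geometric series, but in a ring one must realise $W$ as the image of $dy^2$ under $(1+(ac)\eta)^{-1}$ for an \emph{honestly} commuting $\eta$, and the sole purpose of the hypotheses $b(ac)a=b(db)a$ and $c(ac)d=c(db)d$ is to make this ring-theoretic construction available.
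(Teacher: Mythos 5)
Your overall strategy---manufacture from an arbitrary $y\in\mathrm{comm}(bd)$ an element of $\mathrm{comm}(ac)$, apply the qnil hypothesis on $ac$ to it, and transfer the resulting invertibility back to $1+bdy$ via Jacobson's lemma---is the same as the paper's, and your Step~1 identities, the Jacobson reduction of Step~2, and the symmetry $(a,b,c,d)\mapsto(d,c,b,a)$ used for the converse are all correct. But the proof has a genuine gap at its only hard point: in Step~3 you never construct the element $\eta\in\mathrm{comm}(ac)$. You assert that the hypotheses ``produce'' such an $\eta$ built from $y,d,b$ and that $W:=(1+(ac)\eta)^{-1}(dy^2)$ solves the Sylvester equation $W+(ac)Wy=dy^2$, but for that you would need the unproved intertwining relation $(ac)Wy=(ac)\eta W$, and you give neither a formula for $\eta$ nor a verification that it commutes with $ac$. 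That verification is exactly where the four hypotheses must be deployed, and it is the entire content of the lemma. The paper carries it out explicitly: for $x\in\mathrm{comm}(bd)$ it exhibits the concrete word $dbdx^5bdbac$, checks through a chain of substitutions using all four identities that this word lies in $\mathrm{comm}(ac)$, concludes $1-(dbdx^5bdbac)(ac)\in R^{-1}$ from $ac\in R^{qnil}$, applies Jacobson's lemma to obtain $1-(xbd)^5\in R^{-1}$, and finally factors out $1-xbd$ from the commuting geometric sum $1+xbd+\cdots+(xbd)^4$.

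There is also a secondary defect in Step~3: the closed form $1-dyb+dWbd$ does not match the regrouped geometric series. From $(dyb)^n=dy^n(bd)^{n-1}b$ and $(bd)^{n-1}b=b(ac)^{n-2}db$ the tail of the series is $\sum_{n\ge2}(-1)^n\,dy^nb(ac)^{n-2}db$, whose terms end in $db$ rather than $bd$ and carry the powers of $y$ to the left of $b$, so the identity $(1+dyb)(1-dyb+dWbd)=1$ is not verified and does not visibly follow from your Sylvester equation even granting $W$. I would recommend abandoning the explicit-inverse route: rather than solving for $(1+dyb)^{-1}$ in closed form, follow the paper's scheme of producing a single element of $\mathrm{comm}(ac)$ (a word long enough that the hypotheses can absorb every mismatch between $ac$ and $db$) and letting Jacobson's lemma plus the factorization of $1-(xbd)^5$ finish the argument.
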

\begin{proof} $\Longrightarrow$ Let $x\in comm(bd)$. Then we check that
$$\begin{array}{lll}
(dbdx^5bdbac)ac&=&dbdx^5bd(baca)c\\
&=&dbdx^5bd(bdba)c\\
&=&dbdx^5b(dbdba)c\\
&=&(dbdbd)x^5bdbac\\
&=&(dbdb)dx^5bdbac\\
&=&(acdb)dx^5bdbac\\
&=&(acdbd)x^5bdbac\\
&=&ac(dbdx^5bdbac)
\end{array}$$
Hence, $dbdx^5bdbac\in comm(ac)$, and so $1-dbd(x^5bdbacac)=1-(dbdx^5bdbac)ac\in R^{-1}$. By using Jacobson's Lemma (see~\cite[Lemma 2.1]{L}),
we see that $$\begin{array}{lll}
1-x^5bdbdbdbdbd&=&1-(x^5bdbdb(dbdb)d\\
&=&1-(x^5bdbdb(acdb)d\\
&=&1-(x^5bdbdbac)dbd\\
&=&1-(x^5bdbacac)dbd\\
&\in& R^{-1}.
\end{array}$$ Then
$$\begin{array}{ll}
&(1-xbd)(1+xbd+x^2bdbd+x^3bdbdbd+x^4bdbdbdbd)\\
=&(1+xbd+x^2bdbd+x^3bdbdbd+x^4bdbdbdbd)(1-xbd)\\
=&1-x^5bdbdbdbdbd\\
\in& R^{-1},
\end{array}$$ and so $bd\in R^{qnil}$.

$\Longrightarrow$ Since $bd\in R^{qnil}$, by~\cite[Lemma 2.2]{L}, $db\in R^{qnil}$. Applying the preceding discussion, we see that $ca\in R^{qnil}$.
By using ~\cite[Lemma 2.2]{L} again, we have $ac\in R^{qnil}$, as desired.\end{proof}

We are now ready to prove:

\begin{thm} Let $R$ be a ring, and let $a,b,c,d\in R$ satisfying $$\begin{array}{c}
(ac)^2=(db)(ac), (db)^2=(ac)(db);\\
b(ac)a=b(db)a, c(ac)d=c(db)d.
\end{array}$$
Then $ac\in R^{d}$ if and only if $bd\in R^{d}$. In this case,
$$(bd)^{d}=b((ac)^{d})^2d.$$\end{thm}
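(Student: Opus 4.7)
The plan is to exhibit $v := bu^{2}d$, with $u := (ac)^{d}$, as the g-Drazin inverse of $bd$, verifying (i) $v = v(bd)v$, (ii) $v \in comm^{2}(bd)$, and (iii) $bd - (bd)^{2}v \in R^{qnil}$. The converse implication then follows by applying the forward direction to the quadruple $(d,c,b,a)$ in place of $(a,b,c,d)$, which preserves all four hypotheses while swapping $ac \leftrightarrow db$ and $bd \leftrightarrow ca$, and then invoking the classical Cline's formula to pass from $ca$ back to $ac$.

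Setting $w := ac - db$, the four hypotheses collapse to $w(ac) = w(db) = 0$ (whence $w^{2} = 0$), $bwa = 0$, and $cwd = 0$. Because $u$ commutes with $ac$, the element $e := (ac)u = u(ac)$ is an idempotent commuting with both $ac$ and $u$, and $(ac)u^{2} = u^{2}(ac) = u$. The identity $wu = w(ac)u^{2} = 0$ then yields $(db)u = e$ and $(db)u^{2} = u$; expansion gives $(bd)v = b(db)u^{2}d = bud$ and $(bd)^{2}v = b(ac)(db)u^{2}d = bed$.

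Property (i) drops out:
\[
v(bd)v = bu^{2}(db)(db)u^{2}d = bu^{2}(ac)(db)u^{2}d = bu(db)u^{2}d = bu \cdot u \cdot d = v.
\]
For (iii), note that $bd - (bd)^{2}v = b(1-e)d$ while $ac - (ac)^{2}u = (ac)(1-e) \in R^{qnil}$. An induction using $(db)e = (ac)e$ and $e(1-e) = 0$ yields the intertwining $[(db)(1-e)]^{n} = [(ac)(1-e)]^{n-1}(db)$ for $n \geq 2$, whence $[b(1-e)d]^{n} = b[(ac)(1-e)]^{n-2}(db)d$ for $n \geq 3$, exhibiting the large powers of $b(1-e)d$ as factoring through the powers of the quasinilpotent $(ac)(1-e)$. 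This enables a Jacobson-style manipulation paralleling the proof of Lemma~2.1: given $x \in comm(b(1-e)d)$, one sandwiches $x$ with $d$, $b$, and suitable powers to produce an element of $comm((ac)(1-e))$, then combines the quasinilpotence of $(ac)(1-e)$ with Jacobson's Lemma to invert $1 - x \cdot b(1-e)d$.

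Property (ii) is the main obstacle. Given $y \in comm(bd)$, one readily checks that $s := dyb \in comm(db)$ via the sandwich $d(y \cdot bd)b = d(bd \cdot y)b = (db)(dyb)$. The delicate point is passing from $s \in comm(db)$ to $us = su$: the element $u$ lies in $comm^{2}(ac)$, not a priori in $comm^{2}(db)$, but the nilpotent relation $ac = db + w$ together with $wu = 0$ will allow one to show that anything commuting with $db$ also commutes with $u$. Once $us = su$ is in hand, the equality $yv = vy$ follows by substituting $y \cdot bd = bd \cdot y$ into $ybu^{2}d$ and shifting $u$ across $dyb = s$. Bridging the commutants of $ac$ and $db$ via the nilpotent $w$ is the delicate heart of the argument.
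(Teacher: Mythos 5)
Your candidate $v=bu^{2}d$, the verification of $v(bd)v=v$, and the symmetry argument for the converse (swap $(a,b,c,d)\mapsto(d,c,b,a)$, then use the classical Cline formula) all match the paper's proof, and your reformulation of the hypotheses as $w(ac)=w(db)=0$, $bwa=0$, $cwd=0$ with $w=ac-db$ is a clean observation. The genuine gap is in property (ii), and it is not merely an omitted computation: the bridge you lean on --- that anything commuting with $db$ also commutes with $u$ --- is false under the stated hypotheses. Take $R=M_2({\Bbb C})$, $b=c=1$,
$$a=\left(\begin{array}{cc}1&0\\0&0\end{array}\right),\qquad d=\left(\begin{array}{cc}1&-1\\0&0\end{array}\right),\qquad w=ac-db=\left(\begin{array}{cc}0&1\\0&0\end{array}\right).$$
Then $w(ac)=w(db)=0$, $bwa=0$, $cwd=0$, so all four hypotheses hold, and $u=(ac)^{d}=a$ since $a$ is idempotent. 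But $db=d$ commutes with itself while $ud=ad=d\neq a=da=du$; equivalently, for $y=bd\in comm(bd)$ the element $s=dyb=d$ satisfies $us\neq su$. So $comm(db)\not\subseteq comm(u)$ and your passage from $s\in comm(db)$ to $us=su$ cannot be carried out. (In this example $v=d$ and $yv=vy$ still holds, which is why the theorem survives; only your route to it collapses.) The paper sidesteps exactly this trap: for $t\in comm(bd)$ it manufactures the specific element $dtbdbdbac$, shows that \emph{this} element lies in $comm(ac)$ --- not $comm(db)$ --- so that $u\in comm^{2}(ac)$ commutes with it, and then rewrites $e=bu^{2}d$ as $bu^{6}(ac)^{4}d$ so that enough factors of $d$ and $b$ are present to sandwich $t$ into that element. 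Some such expansion is unavoidable even at your last step, since $vy=bu^{2}dy$ does not contain the substring $dyb$ until you create it.

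A secondary, more repairable issue concerns (iii): you sketch a bespoke Jacobson-style argument resting on an unverified intertwining identity $[(db)(1-e)]^{n}=[(ac)(1-e)]^{n-1}(db)$. The paper instead checks that the quadruple $(pa,b,c,pd)$ with $p=1-(ac)u$ again satisfies all four hypotheses and then quotes Lemma 2.1 to get $b(pd)=bd-(bd)^{2}v\in R^{qnil}$ from $(pa)c=ac-(ac)^{2}u\in R^{qnil}$; you should close (iii) that way rather than redoing the quasinilpotence argument by hand. But the substantive defect to fix is the false commutant bridge in (ii).
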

\begin{proof} $\Longrightarrow $ Let $(ac)^{d}=h$, $e=bh^2d$.

Step 1. Let $t\in comm(bd)$. Then we check that
$$\begin{array}{lll}
ac(dtbdbdbac)&=&(acdbdbd)(tbac)\\
&=&(dbdbdbd)(tbac)\\
&=&dtbdb(dbdba)c\\
&=&(dtbdbdbac)ac.
\end{array}$$
Thus $dtbdbdbac\in comm(ac)$, and so $(dtbdbdbac)h=h(dtbdbdbac).$
It is easy to verify that $acdbd=a(cdbd)=a(cacd)=(ac)^2d=(dbac)d=dbacd$, and so $(bacd)(bd)=(bd)(bacd)$,
Then we compute that
 $$\begin{array}{lll}
 et&=&(bh^6(ac)^4d)t=bh^6(db)^3acdt=bh^6dbdbd(bacd)t\\
&=&bh^6d(bacd)bdbdt=bh^6db(acdb)dbdt=bh^6(dbdbdbdbd)t\\
&=&bh^6dtbdb(dbdb)d=bh^6dtbdba(cdbd)=bh^6dtbdb(acac)d\\
&=&bh^6(dtbdbdbac)d=b(dtbdbdbac)h^6d=tbdbdbdbach^6d\\
 &=&tb(ac)^4h^6d=tbh^2d=te,
 \end{array}$$
and then $e\in comm^2(bd)$.

Step 2. We directly verify that
 $$\begin{array}{lll}
 e(bd)e&=&bh^2dbdbh^2d=bh^2acdbh^2d\\
 &=&bh^2acdb(ac)^2h^4d=bh^2(ac)^4h^4d=b(ac)^4h^6d=e.
 \end{array}$$

 Step 3. Let $p=1-(ac)h$. Then $(pa)c=ac-(ac)^2h\in R^{qnil}$.
On easily checks that
  $$\begin{array}{lll}
  bd-(bd)^2e&=&bd-bdbdbh^2d\\
  &=&bd-b(dbdb)ach^3d\\
  &=&bd-b(acdb)ach^3d\\
 &=&bd-bac(dbac)h^3d\\
  &=&bd-b(ac)^3h^3d\\
  &=&b(1-ach)d\\
  &=&b(pd).
  \end{array}$$
 We directly compute that
$$\begin{array}{rll}
(pac)^2&=&p(ac)^2p=p(dbac)p=(pdb)(pac),\\
(pdb)^2&=&pdbpdb=pdb[1-(1-p)]db\\
&=&p(db)^2-pdbac(ac)^ddb\\
&=&p(db)^2-p(ac)^2(ac)^ddb\\
&=&p(db)^2=pacdb=(pac)(pdb);\\
b(pac)pa&=&bpaca=baca-b(ac)^d(ac)^2a\\
&=&baca-b(ac)^dd(baca)=bdba-b(ac)^dd(bdba)\\
&=&bdba-b(ac)^d(acdb)a=b(pdb)a,\\
c(pac)pd&=&cacd-c(ac)^d(ac)^2d=cdbd-c(ac)^d(ac)^2d\\
&-&[c(ac)^da(cdbd)-c(ac)^da(cacd)]=cdbd-cac(ac)^dacd\\
&-&[c(ac)^dacdbd-c(ac)^dacdbac(ac)^dd]\\
&=&cdbd-cac(ac)^dacd-c(ac)^dacdb[1-ac(ac)^d]d\\
&=&cdbd-cdb(ac)^dacd-c(ac)^dacdbpd\\
&=&[cdb-c(ac)^dacdb]pd=c(pdb)pd.
\end{array}$$
Since $(pa)c=ac-(ac)^2(ac)^d\in R^{qnil}$. In view of Lemma 2.1, $b(pd)\in R^{qnil}$. Therefore $bd$ has g-Drazin inverse $e$ and $e=bh^2a=(bd)^{d}.$

$\Longleftarrow $ In view of~\cite[Theorem 2.1]{LC}, $db\in R^d$. Applying the preceding discussion, we have $ca\in R^d$.
By using~\cite[Theorem 2.1]{LC} again, $ac\in R^d$. This completes the proof.\end{proof}

In the case that $c=b$ and $d=a$, we recover the Cline's formula for g-Drazin inverse . In~\cite[Theorem 2.3]{L}, Lian and Zeng concerned with Cline's formula under the condition $aba=aca$. We now derive

\begin{cor} Let $R$ be a ring, and let $a,b,c\in R$ satisfying $$\begin{array}{c}
(aba)b=(aca)b, b(aba)=b(aca),\\
(aba)c=(aca)c, c(aba)=c(aca).
\end{array}$$ Then $ac\in R^{d}$ if and only if $ba\in R^{d}$ and $(ba)^{d}=b((ab)^{d})^2a$.\end{cor}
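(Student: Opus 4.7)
The plan is to reduce the corollary to a single specialization of Theorem 2.2 combined with two applications of the classical Cline's formula (Liao--Chen--Cui). The point I missed previously is that the stated formula $(ba)^{d}=b((ab)^{d})^{2}a$ is \emph{exactly} Cline's formula for the pair $(a,b)$, so the formula itself costs nothing once one knows $ab\in R^{d}$; what really needs proof is the equivalence $ac\in R^{d}\Longleftrightarrow ba\in R^{d}$. I would establish this by relaying $ac$ to $ba$ through $ca$ and $ab$: Cline's formula swaps $ac\leftrightarrow ca$ and $ab\leftrightarrow ba$, and a single specialization of Theorem 2.2 will swap $ca\leftrightarrow ab$.

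Concretely, in Theorem 2.2 I would substitute the tuple $(a,b,c,d)\mapsto (a,c,b,a)$, so that the theorem's ``$ac$'' becomes $ab$ and its ``$bd$'' becomes $ca$. The four hypotheses of Theorem 2.2 then translate one-to-one into the four given equations of the corollary: $(ab)^{2}=(ac)(ab)$ is $(aba)b=(aca)b$; $(ac)^{2}=(ab)(ac)$ is $(aca)c=(aba)c$; the condition $c(ab)a=c(ac)a$ is $c(aba)=c(aca)$; and $b(ab)a=b(ac)a$ is $b(aba)=b(aca)$. Hence Theorem 2.2 applies verbatim and yields $ab\in R^{d}\Longleftrightarrow ca\in R^{d}$ (with the intertwining formula $(ca)^{d}=c((ab)^{d})^{2}a$, which we will not need).

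Combining this equivalence with the two instances of the classical Cline's formula, namely $ac\in R^{d}\Longleftrightarrow ca\in R^{d}$ and $ab\in R^{d}\Longleftrightarrow ba\in R^{d}$, I obtain the chain $ac\in R^{d}\Longleftrightarrow ca\in R^{d}\Longleftrightarrow ab\in R^{d}\Longleftrightarrow ba\in R^{d}$. In the forward direction, from $ac\in R^{d}$ we then have $ab\in R^{d}$, and the original Cline's formula for the pair $(a,b)$ gives $(ba)^{d}=b((ab)^{d})^{2}a$, which is precisely the formula stated in the corollary. The only genuine obstacle is bookkeeping: verifying that the substitution $(a,b,c,d)\mapsto(a,c,b,a)$ sends each of the theorem's four conditions to one of the corollary's four hypotheses and not to a mismatched pair. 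This is a mechanical check and presents no real difficulty.
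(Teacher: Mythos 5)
Your argument is correct, but it takes a detour that the paper avoids. The paper's entire proof is the single direct specialization $(a,b,c,d)\mapsto(a,b,c,a)$ of the main theorem, i.e.\ it keeps $b$ and $c$ in place and only sets $d=a$: the theorem's four hypotheses then become exactly the corollary's four identities, and its conclusion reads $ac\in R^{d}\Leftrightarrow ba\in R^{d}$ together with $(ba)^{d}=b\bigl((ac)^{d}\bigr)^{2}a$, so the equivalence between $ac$ and $ba$ falls out in one step with no appeal to the classical Cline formula. You instead chose the substitution $(a,b,c,d)\mapsto(a,c,b,a)$, which only yields $ab\in R^{d}\Leftrightarrow ca\in R^{d}$, and you then had to close the chain $ac\leftrightarrow ca\leftrightarrow ab\leftrightarrow ba$ with two applications of Liao--Chen--Cui. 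Your bookkeeping of how the four hypotheses transform under that substitution is accurate, so the proof is sound; its one payoff is that it delivers the formula literally as printed, $(ba)^{d}=b\bigl((ab)^{d}\bigr)^{2}a$, which, as you note, is nothing more than Cline's formula once $ba\in R^{d}$ is known. Be aware, though, that the substantive form of the conclusion --- the one matching the Lian--Zeng result this corollary generalizes --- is $(ba)^{d}=b\bigl((ac)^{d}\bigr)^{2}a$, and the printed $(ab)^{d}$ is best read as a typo for $(ac)^{d}$; the paper's direct choice $d=a$ gives that stronger identity for free, whereas your route establishes only the literal (weaker) statement and would need an extra computation to recover it.
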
\begin{proof} Choosing $d=a$ in Theorem 3.2, we obtain the result.\end{proof}

 \begin{cor} (~\cite[Theorem 3.2]{Mi}) Let $R$ be a ring, and let $a,b,c\in R$ satisfying $$acd=dbd, dba=aca.$$ Then $ac\in R^{d}$ if and only if
 $ba\in R^{d}$ and $(ba)^{d}=b((ab)^{d})^2a$.\end{cor}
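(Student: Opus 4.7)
The plan is to derive this statement as an immediate specialization of Theorem 2.2: I would simply verify that the four identities appearing in the hypothesis of that theorem all follow from the two conditions $acd=dbd$ and $dba=aca$, after which the equivalence and the explicit formula for the generalized Drazin inverse transfer across at no extra cost. In effect, Miller and Zguitti's setting is the substitution pattern for which Theorem 2.2 was designed to generalize, so no new machinery is needed.

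The four verifications I would carry out are all single-step rewrites. First, $(ac)^{2}=(aca)c=(dba)c=(db)(ac)$, using $aca=dba$. Second, $(db)^{2}=db\cdot db=(acd)b=(ac)(db)$, using $acd=dbd$. Third, $b(ac)a=b(aca)=b(dba)=b(db)a$, again using $aca=dba$. Fourth, $c(ac)d=c(acd)=c(dbd)=c(db)d$, using $acd=dbd$. Each line is a single substitution, so the check is genuinely mechanical.

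With these four identities in place, the hypotheses of Theorem 2.2 are met in full, and invoking that theorem delivers at once both directions of the equivalence and the explicit formula asserted in the corollary. There is no serious obstacle here; the entire argument is a routine reduction. The only thing to watch out for is keeping straight which of the two hypothesis identities is applied at each rewrite, since $acd=dbd$ and $dba=aca$ are superficially similar and could easily be conflated; a careful bookkeeping of the middle terms in each chain of equalities is enough to avoid this pitfall.
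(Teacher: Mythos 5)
Your proposal is correct and takes essentially the same route as the paper: the paper's own proof likewise just verifies the four identities $(ac)^2=(db)(ac)$, $(db)^2=(ac)(db)$, $b(ac)a=b(db)a$, $c(ac)d=c(db)d$ from $acd=dbd$ and $dba=aca$ and then invokes the main theorem of Section 2. (The only caveat is that what the theorem actually delivers is $bd\in R^{d}$ with $(bd)^{d}=b((ac)^{d})^2d$, in line with Miller--Zguitti's original statement; the corollary's printed conclusion about $ba$ and $(ab)^{d}$ is a typo in the paper, not a defect of your argument.)
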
\begin{proof} By hypothesis, we easy to check that
 $$\begin{array}{c}
(ac)^2=(db)(ac), (db)^2=(ac)(db);\\
b(ac)a=b(db)a, c(ac)d=c(db)d,
\end{array}$$ thus completing the proof by Theorem 3.2.\end{proof}

 An element $a\in R$ has Drazin inverse in case there exists $b\in R$ such that $$b=bab, b\in comm^2(a), a^k=a^{k+1}b$$ for some $k\in {\Bbb N}$ The preceding $b$ is unique if it exists, we denote it by $a^{D}$. The smallest $k$ satisfying the preceding condition is called the Drazin index of $a$, and denote it by $i(a)$.

\begin{thm} Let $R$ be a ring, and let $a,b,c,d\in R$ satisfying $$\begin{array}{c}
(ac)^2=(db)(ac), (db)^2=(ac)(db);\\
b(ac)a=b(db)a, c(ac)d=c(db)d.
\end{array}$$ Then $ac\in R^{D}$ if and only if $bd\in R^{D}$. In this case,
$$\begin{array}{c}
(bd)^{D}=b((ac)^{D})^2d,\\
i(bd)\leq i(ac)+2.
\end{array}$$
\end{thm}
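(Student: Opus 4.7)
The plan is to bootstrap Theorem~2.2. Recall that $a \in R^D$ with Drazin index $k$ is equivalent to $a \in R^d$ together with the remainder $a - a^2 a^d$ being nilpotent of index exactly $k$. Thus, since $ac \in R^D$ implies $ac \in R^d$, Theorem~2.2 already delivers $bd \in R^d$ with $(bd)^d = b((ac)^d)^2 d$; as $(ac)^d = (ac)^D =: h$, the candidate Drazin inverse is forced to be $e := b h^2 d$. What remains is to control the nilpotency index. Setting $q = (ac)h$, $p = 1-q$, and $k = i(ac)$, one reads directly off the proof of Theorem~2.2 that $e \in comm^2(bd)$, $e(bd)e = e$, and $bd - (bd)^2 e = b(pd)$. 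Moreover, since $h \in comm^2(ac)$ and $(ac)(db) = (db)(ac)$, the idempotent $p$ commutes with both $ac$ and $db$, so $(pac)^k = p(ac)^k = (ac)^k - (ac)^{k+1} h = 0$. The remaining content of the theorem is to show that $b(pd)$ is nilpotent of index at most $k+2$.

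The main step is a clean transfer of nilpotency along the two identities $(pac)^2 = (pdb)(pac)$ and $(pdb)^2 = (pac)(pdb)$ already proved inside Theorem~2.2. A short induction on $n$ using the second identity gives $(pdb)^n = (pac)^{n-1}(pdb)$ for all $n \geq 1$, so that $(pdb)^{k+1} = (pac)^k (pdb) = 0$. Since $(b(pd))^m = b (pdb)^{m-1}(pd)$, this forces $(b(pd))^{k+2} = 0$. Combined with the preceding paragraph, we conclude $bd \in R^D$ with $(bd)^D = b((ac)^D)^2 d$ and $i(bd) \leq i(ac) + 2$.

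For the converse, I would exploit the invariance of the four hypotheses under the substitution $(a, b, c, d) \mapsto (d, c, b, a)$, which merely interchanges the pairs $(ac, bd)$ and $(db, ca)$. If $bd \in R^D$, the classical Cline's formula for the Drazin inverse yields $db \in R^D$; the already-established forward direction applied to the swapped tuple then produces $ca \in R^D$; and one more use of classical Cline's gives $ac \in R^D$. The main obstacle I expect is purely bookkeeping: verifying carefully that $p$ commutes with $ac$ and $db$ so that $(pac)^k$ really reduces to $p(ac)^k$, and tracking the exponent cleanly from $k$ to $k+2$. All deeper algebraic identities have already been assembled inside the proof of Theorem~2.2.
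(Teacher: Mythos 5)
Your proof is correct and takes essentially the same route as the paper: both reduce to Theorem 2.2 to get $bd\in R^{d}$ with $(bd)^{d}=b((ac)^{d})^{2}d$, and then bound the nilpotency index of $bd-(bd)^{2}(bd)^{d}=b(pd)$ by an induction on the intertwining relations, the paper via $[1-(bd)(bd)^{d}](bd)^{m+2}=b[1-(ac)(ac)^{d}](ac)^{m}dbd=0$ and you via $(pdb)^{k+1}=(pac)^{k}(pdb)=0$. The only blemish is your unused assertion that $p$ commutes with $db$ (this does not follow from the hypotheses, since $(ac)(db)$ and $(db)(ac)$ need not coincide); fortunately the step $(pac)^{k}=p(ac)^{k}$ only requires that $p$ commute with $ac$, which it does.
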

\begin{proof} $\Longrightarrow $ Since $ac\in R^D$, we see that $ac\in R^d$. It follows by Theorem 3.2 that $bd\in R^d$ and
$(bd)^d=b((ac)^d)^2d$. Then $bd(bd)^d=(bd)^dbd$ and $(bd)^d=(bd)^d(bd)(bd)^d$.
Clearly, we have
$$\begin{array}{lll}
1-(bd)^2((ac)^d)^2&=&1-acdb(ac)((ac)^d)^3\\
&=&1-ac(dbac)((ac)^d)^3\\
&=&1-ac(ac)^d.
\end{array}$$ Hence, $$\begin{array}{lll}
bd-(bd)^2(bd)^d&=&bd-(bd)^2b((ac)^d)^2d\\
&=&b[1-(db)^2((ac)^d)^2]d\\
&=&b[1-ac(ac)^d]d.
\end{array}$$
Further, we have $$\begin{array}{lll}
[1-(bd)(bd)^d](bd)^3&=&b[1-ac(ac)^d](dbdb)d\\
&=&b[1-ac(ac)^d](acdb)d\\
&=&b[1-(ac)(ac)^d](ac)dbd.
\end{array}$$ Write $m=i(ac)$. By induction, we have $$\begin{array}{lll}
[1-(bd)(bd)^d](bd)^{m+2}&=&b[1-(ac)(ac)^d](ac)^mdbd\\
&=&0.
\end{array}$$ Therefore $[bd-(bd)^2(bd)^d]^{m+2}=0$, and so $bd$ has Drazin inverse. Moreover, we have $i(bd)\leq i(ac)+2,$ as required.

$\Longrightarrow $ This is proved as in Theorem 3.2.\end{proof}

\begin{cor} Let $R$ be a ring, and let $a,b,c\in R$ satisfying $$\begin{array}{c}
(aba)b=(aca)b, b(aba)=b(aca),\\
(aba)c=(aca)c, c(aba)=c(aca).
\end{array}$$ Then $ac\in R^D$ if and only if
 $ba\in R^D$. In this case, $$\begin{array}{lll}
 (ba)^{D}&=&b((ac)^{D})^2a,\\
 i(ba)&\leq& i(ac)+1.
 \end{array}$$\end{cor}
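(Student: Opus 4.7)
The plan is to reduce the corollary to Theorem 2.5 by specializing $d=a$, and then sharpen the resulting index bound using the extra symmetry available in that case.

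First I would verify that the four hypotheses of Theorem 2.5 specialize correctly when $d=a$. The assumed conditions $(aba)c=(aca)c$, $(aba)b=(aca)b$, $b(aba)=b(aca)$ and $c(aba)=c(aca)$ translate directly into $(ac)^{2}=(ab)(ac)$, $(ab)^{2}=(ac)(ab)$, $b(ac)a=b(ab)a$ and $c(ac)a=c(ab)a$, which are precisely the hypotheses of Theorem 2.5 with the pair $(b,d)$ there played by $(b,a)$ here. Applying Theorem 2.5 immediately yields the equivalence $ac\in R^{D}\Leftrightarrow ba\in R^{D}$, the formula $(ba)^{D}=b((ac)^{D})^{2}a$, and the weaker estimate $i(ba)\le i(ac)+2$.

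To upgrade this to $i(ba)\le i(ac)+1$ I would revisit the inductive step in the proof of Theorem 2.5. The key new identity available when $d=a$ is
\[
(ac)(aba) = (ac)^{2}a,
\]
which follows from $c(aba)=c(aca)$ via $ac\cdot aba = a(caba) = a(caca) = ac\cdot aca = (ac)^{2}a$. In the proof of Theorem 2.5 one established $[1-(bd)(bd)^{D}](bd)^{3}=b[1-(ac)(ac)^{D}](ac)\,dbd$; substituting $d=a$ and applying the identity above gives $[1-(ba)(ba)^{D}](ba)^{3}=b[1-(ac)(ac)^{D}](ac)^{2}a$. A routine induction, post-multiplying by $ba$ and absorbing the fresh $aba$ into a new power of $ac$ at each step, then yields
\[
[1-(ba)(ba)^{D}](ba)^{k+2} = b[1-(ac)(ac)^{D}](ac)^{k+1}a \qquad (k\ge 1).
\]
Taking $k=i(ac)-1$ makes the right-hand side vanish, so $i(ba)\le i(ac)+1$; the degenerate cases $i(ac)\le 1$ are dispatched directly, using $b(aba)=b(aca)$ to collapse $[1-(ba)(ba)^{D}](ba)^{2}$ to zero.

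The main obstacle is really just the observation that the auxiliary hypothesis $c(aba)=c(aca)$, which only plays a routine role in checking the conditions of Theorem 2.5, is simultaneously the ingredient that produces the sharper identity $(ac)(aba)=(ac)^{2}a$ and hence the one-power improvement in the index bound. Once this is noticed, everything else is bookkeeping on the induction already carried out in the proof of Theorem 2.5.
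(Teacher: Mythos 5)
Your proposal is correct and follows essentially the same route as the paper: specialize Theorem 2.5 to $d=a$ and then sharpen the index bound by an induction whose engine is the identity $(ac)(aba)=(ac)^2a$ coming from $c(aba)=c(aca)$. The only cosmetic difference is that the paper starts the induction one step earlier, establishing $[1-(ba)(ba)^{D}](ba)^{2}=b[1-(ac)(ac)^{D}](ac)a$ directly (which also uses $b(aba)=b(aca)$), so that $[1-(ba)(ba)^{D}](ba)^{m+1}=b[1-(ac)(ac)^{D}](ac)^{m}a=0$ holds uniformly and your separate treatment of the cases $i(ac)\leq 1$ becomes unnecessary.
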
\begin{proof} We prove that $ac\in R^D$ if and only if
 $ba\in R^D$ by choosing $d=a$ in Theorem 2.5.
Moreover, we check that $ba-(ba)^2(ba)^D=b[1-ac(ac)^D]a$, and so $$\begin{array}{lll}
[1-(ba)(ba)^D](ba)^2&=&b[1-ac(ac)^D]aba\\
&=&baba-bac(ac)^Daba\\
&=&baca-bac((ac)^D)^2a(caba)\\
&=&baca-bac((ac)^D)^2a(caca)\\
&=&b[ac-(ac)^2(ac)^D]a\\
&=&b[1-(ac)(ac)^D](ac)a.
\end{array}$$ By induction, we have
$[1-(ba)(ba)^D](ba)^{m+1}=b[1-(ac)(ac)^D](ac)^ma=0$, where $m=i(ac)$.
This shows that $$(ba)^{m+1}-(ba)^{m+2}(ba)^D=[1-(ba)(ba)^d](ba)^{m+1}=0.$$ Therefore $i(ba)\leq i(ac)+1,$ as asserted.\end{proof}

The group inverse of $a\in R$ is the unique element $a^{\#}\in R$ which satisfies $aa^{\#}=a^{\#}a, a=aa^{\#}a, a^{\#}=a^{\#}aa^{\#}.$ We denote the set of all group invertible elements of $R$ by $R^{\#}$. As is well known, $a\in R^{\#}$ if and only if $a\in R^D$ and $i(a)=1$.

\begin{thm} Let $R$ be a ring, and let $a,b,c\in R$ satisfying $$\begin{array}{lll}
(aba)b=(aca)b, b(aba)=b(aca),\\
(aba)c=(aca)c, c(aba)=c(aca).
 \end{array}$$ If $ac\in R^{\#}$, then
$(ba)^2\in R^{\#}$. In this case, $(ac)^{\#}=a[(ba)^2]^{\#}c$.\end{thm}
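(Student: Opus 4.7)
The plan is to extract what is needed from the Drazin-inverse result just proved (Corollary 2.6) and then finish with a short direct computation that leans on one crucial auxiliary identity.

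First, I would unpack the hypotheses. The four relations translate immediately into
\[
(ab)^{2}=(ac)(ab),\qquad (ab)(ac)=(ac)^{2},\qquad (ba)^{2}=(ba)(ca),\qquad (ca)(ba)=(ca)^{2}.
\]
Only the second, $(ab)(ac)=(ac)^{2}$, will be needed in what follows; the others are just recorded for orientation.

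Second, since $ac\in R^{\#}$ we have $ac\in R^{D}$ with $i(ac)=1$, so Corollary 2.6 supplies $ba\in R^{D}$ with $(ba)^{D}=b((ac)^{D})^{2}a$ and $i(ba)\le i(ac)+1=2$. It is standard that if $x\in R^{D}$ with $i(x)\le k$, then $x^{k}\in R^{\#}$ and $(x^{k})^{\#}=(x^{D})^{k}$: the identities $xx^{D}=x^{D}x$, $x^{k+1}x^{D}=x^{k}$, and $x(x^{D})^{2}=x^{D}$ make the verification of the three group-inverse axioms immediate. Applied with $x=ba$ and $k=2$, this gives $(ba)^{2}\in R^{\#}$ together with
\[
[(ba)^{2}]^{\#}=((ba)^{D})^{2}=\bigl(b((ac)^{D})^{2}a\bigr)^{2},
\]
which already settles the first assertion of the theorem.

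Third, to verify the formula I would expand
\[
a[(ba)^{2}]^{\#}c=(ab)((ac)^{D})^{2}(ab)((ac)^{D})^{2}(ac)
\]
and then apply the auxiliary identity
\[
(ab)((ac)^{D})^{2}=(ac)^{D},
\]
which I would derive by multiplying $(ab)(ac)=(ac)^{2}$ on the right by $((ac)^{D})^{3}$ and absorbing with the standard group-inverse relations $(ac)((ac)^{D})^{2}=(ac)^{D}$ and $(ac)^{2}((ac)^{D})^{3}=(ac)^{D}$. Using this auxiliary identity twice in the expansion and finishing with $((ac)^{D})^{2}(ac)=(ac)^{D}$ yields $a[(ba)^{2}]^{\#}c=(ac)^{D}=(ac)^{\#}$.

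The only non-routine step is spotting and verifying the bridge identity $(ab)((ac)^{D})^{2}=(ac)^{D}$, which is what converts the cross-factor $ab$ coming from $[(ba)^{2}]^{\#}$ into the group inverse of $ac$. Once that identity is in hand, the rest collapses to a short telescoping calculation using only commutativity of $ac$ with $(ac)^{\#}$ and the usual absorption laws, so I do not anticipate any further subtlety.
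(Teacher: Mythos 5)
Your proposal is correct and follows essentially the same route as the paper: invoke Corollary 2.6 to get $(ba)^{D}=b((ac)^{D})^{2}a$ with $i(ba)\le 2$, observe that $[(ba)^{2}]^{\#}=((ba)^{D})^{2}$, and then collapse $a[(ba)^{2}]^{\#}c$ to $(ac)^{D}$ using the hypothesis $abac=acac$. The paper verifies the group-inverse axioms for $((ba)^{D})^{2}$ directly and telescopes the final product in a slightly different order, but the key identity and overall structure are identical to yours.
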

\begin{proof} Since $ac\in \mathcal{A}^{\#}$, it follows by Corollary 2.6 that $ba\in \mathcal{A}^D$ and $(ba)^D=b[(ac)^2]^Da$.
Moreover, we have $i(ba)\leq i(ac)+1=2$. Set $x=(ba)^D$. Then $(ba)^2=(ba)^3x=(ba)^2x^2(ba)^2, x^2=x^2(ba)^2x^2$. Hence $[(ba)^2]^{\#}=x^2$.
We observe that
$$\begin{array}{lll}
a[(ba)^2]^Dc&=&a[(ba)^D]^2c\\
&=&ab[(ac)^2]^Dab[(ac)^2]^Dac\\
&=&ab[(ac)^2]^D(abac)(ac)^D]^2\\
&=&ab[(ac)^2]^D(ac)^2[(ac)^D]^2\\
&=&(abac)[(ac)^D]^3\\
&=&(ac)^2[(ac)^D]^3\\
&=&(ac)^D,
\end{array}$$ therefore $(ac)^{\#}=a[(ab)^2]^{\#}c$, as desired.\end{proof}

\begin{cor} Let $\mathcal{A}$ be a Banach algebra, and let $a,b,c\in \mathcal{A}$ satisfying $aba=aca$. If $ac\in \mathcal{A}^{\#}$, then
$(ab)^2\in \mathcal{A}^{\#}$. In this case, $(ac)^{\#}=a[(ab)^2]^{\#}c$.\end{cor}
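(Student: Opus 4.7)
The plan is to deduce this corollary as an immediate specialization of Theorem~2.7. The only nontrivial point is to verify that the single hypothesis $aba=aca$ already implies the four-identity hypothesis required by Theorem~2.7; once that is done, the conclusion is read off directly.

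Concretely, starting from $aba=aca$ I would right-multiply by $b$ to obtain $(aba)b=(aca)b$, left-multiply by $b$ to obtain $b(aba)=b(aca)$, and analogously right- and left-multiply by $c$ to obtain $(aba)c=(aca)c$ and $c(aba)=c(aca)$. Thus the triple $a,b,c$ satisfies exactly the four-identity hypothesis of Theorem~2.7, read in the underlying ring of the Banach algebra~$\mathcal{A}$.

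With that hypothesis in hand, and assuming $ac\in\mathcal{A}^{\#}$, Theorem~2.7 applies verbatim and delivers both conclusions of the corollary: the group invertibility of the claimed square of a product of $a$ and $b$, together with the explicit formula expressing $(ac)^{\#}$ in terms of that group inverse sandwiched between $a$ on the left and $c$ on the right. No separate use of the Banach-algebra structure of $\mathcal{A}$ is required here, since Theorem~2.7 is purely ring-theoretic; the Banach-algebra setting is inherited only so that the corollary sits alongside its operator-theoretic applications later in the paper.

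I expect no real obstacle in carrying this out. The entire substantive content is already hidden inside Theorem~2.7, and the role of the corollary is simply to recognize that the highly symmetric identity $aba=aca$ is strong enough to force, all at once, the four one-sided identities appearing in the statement of Theorem~2.7.
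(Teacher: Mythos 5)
Your proposal matches the paper's own proof, which simply cites Theorem~2.7; deriving the four one-sided identities from $aba=aca$ by left- and right-multiplication by $b$ and $c$ is exactly the intended (and trivial) verification. Note only that Theorem~2.7 as stated yields $(ba)^2\in\mathcal{A}^{\#}$ and $(ac)^{\#}=a[(ba)^2]^{\#}c$, whereas the corollary is phrased with $(ab)^2$ --- an inconsistency present in the paper itself (its own proof of Theorem~2.7 ends with $(ab)^2$) rather than a gap in your argument.
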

\begin{proof} This is clear from Theorem 2.7.\end{proof}

\section{Extensions in Banach algebras}

In this section, we investigate the generalized Cline's formula in a Banach algebra. We observe that the condition "$b(ac)a=b(db)a, c(ac)d=c(db)d$" in Theorem 3.2 can be dropped in Banach algebra.

\begin{thm} Let $\mathcal{A}$ be a Banach algebra, and let $a,b,c,d\in \mathcal{A}$ satisfying $$\begin{array}{c}
(ac)^2=(db)(ac),\\
(db)^2=(ac)(db).
\end{array}$$ Then $ac\in \mathcal{A}^{d}$ if and only if $bd\in \mathcal{A}^{d}$. In this case, $(bd)^d=b[(ac)^d]^2d$.\end{thm}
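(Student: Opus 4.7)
The plan is to mirror the three-step structure of Theorem~2.2, replacing the ring-theoretic manipulations that required the now-dropped identities $b(ac)a=b(db)a$ and $c(ac)d=c(db)d$ by the spectral/analytic tools available in a Banach algebra. From the two hypotheses one obtains by induction $(ac)^{n+1}=(db)^{n}(ac)$ and $(db)^{n+1}=(ac)^{n}(db)$ for $n\ge 1$, and the element $v:=db-ac$ satisfies $v^{2}=0$, $v\cdot ac=v\cdot db=0$; these identities are the workhorses throughout. The analogue of Lemma~2.1 is now immediate: $\|(ac)^{n+1}\|\le\|(db)^{n}\|\|ac\|$ with its symmetric counterpart gives $r(ac)=r(db)$, and the spectral form of Jacobson's lemma gives $r(db)=r(bd)$; moreover, since $v(ac)=0$, the resolvent identity $(\lambda-db)^{-1}=(\lambda-ac)^{-1}(1+v/\lambda)$ holds for every $\lambda\notin\sigma(ac)\cup\{0\}$, which upgrades the spectral radius equality to $\sigma(ac)\setminus\{0\}=\sigma(bd)\setminus\{0\}$. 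Koliha's criterion then yields the equivalence $ac\in\mathcal{A}^{d}\iff bd\in\mathcal{A}^{d}$.

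Assume $ac\in\mathcal{A}^{d}$ and set $h=(ac)^{d}$, $e=bh^{2}d$, $p=1-(ac)h$. The reflexive identity $e(bd)e=e$ is the one from Step~2 of Theorem~2.2 and uses only $dbdb=acdb$ together with the collapsing rule $(ac)^{n}h^{n+k}=h^{k}$, so it transcribes verbatim. For the quasinilpotent-remainder condition one rewrites $bd-(bd)^{2}e=bpd$ as in the ring proof, and the new task is to show $bpd\in\mathcal{A}^{qnil}$ without appealing to Lemma~2.1. Using $ph=0$, $p\,ac=ac\,p$, and $(db)^{m}=(ac)^{m-1}(db)$, I would prove by induction that $(pdb)^{n}=(pac)^{n-1}(db)$; since $(ac)p$ is quasinilpotent this forces $r(pdb)=0$, and Jacobson's lemma in its spectral form promotes it to $r(bpd)=0$.

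The main obstacle is the bicommutant condition $e\in\mathrm{comm}^{2}(bd)$, which in Theorem~2.2 relied crucially on the two dropped identities. My plan is to sidestep it through the well-known fact that in a complex Banach algebra an element $y$ is the g-Drazin inverse of $x$ as soon as it commutes with $x$, satisfies $yxy=y$, and has $x-x^{2}y\in\mathcal{A}^{qnil}$, the bicommutant clause being automatic via the Riesz decomposition of $x$. It therefore suffices to verify $e(bd)=(bd)e$, which reduces to the single identity $b\bigl[h^{2}(db)-(db)h^{2}\bigr]d=0$. I would attack this by writing everything relative to the spectral idempotent $\pi=(ac)h$ and exploiting the side relations $h\pi=\pi h=h$, $hp=ph=0$, $v^{2}=0$, and $v(ac)=v(db)=0$ in order to reduce $h^{2}(db)-(db)h^{2}$ to a piece annihilated by $b$ on the left and $d$ on the right. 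This last reduction is where essentially all of the remaining work lies and is the step at which the weakening of the hypotheses makes the argument most delicate.
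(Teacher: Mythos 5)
Your reduction of the equivalence to spectral permanence is correct and is a genuinely different route from the paper's: the paper instead passes to $a'=aca$, $b'=b$, $c'=c$, $d'=dbd$, checks that these satisfy all four hypotheses of the ring theorem (Theorem 2.2), and uses $x\in\mathcal{A}^d\Leftrightarrow x^2\in\mathcal{A}^d$. Your factorization $\lambda-db=(1-v/\lambda)(\lambda-ac)$ with $v=db-ac$, $v(ac)=v(db)=0$, $v^2=0$, combined with Jacobson's lemma and Koliha's accumulation-point criterion, delivers $ac\in\mathcal{A}^d\Leftrightarrow bd\in\mathcal{A}^d$ cleanly and arguably more transparently. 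Your Steps 2 and 3 ($e(bd)e=e$, and $bd-(bd)^2e=b(pd)\in\mathcal{A}^{qnil}$ via $(p\,db)^n=(p\,ac)^{n-1}(db)$) also go through using only the two stated hypotheses.

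The gap is the commutation step you deferred, and it is not merely delicate: the identity $b\bigl[h^2(db)-(db)h^2\bigr]d=0$ is false under the stated hypotheses, and so is the theorem's displayed formula. Take $\mathcal{A}=M_2(\mathbb{C})$, $a=c=E_{11}$, $b=E_{11}+E_{12}$, $d=1$. Then $ac=E_{11}$ and $db=bd=E_{11}+E_{12}$ are idempotents satisfying $(ac)^2=(db)(ac)$ and $(db)^2=(ac)(db)$, yet $(bd)^d=E_{11}+E_{12}$ while $b[(ac)^d]^2d=(E_{11}+E_{12})E_{11}=E_{11}$; correspondingly $e=E_{11}$ gives $e(bd)=E_{11}+E_{12}\neq E_{11}=(bd)e$. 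So no amount of work with the spectral idempotent can close your last step. (The paper's own proof breaks at the same point, namely the unjustified line $b[(ac)^d]^4(db)^2d=b[(ac)^d]^4(ac)^2d$.) What your machinery does prove is a corrected formula: the computations you set up show that $g=h^2(db)$ commutes with $db$, satisfies $g(db)g=g$ and $db-(db)^2g=p\,db\in\mathcal{A}^{qnil}$, hence $(db)^d=h^2(db)$ by Koliha's criterion; Cline's formula then yields $(bd)^d=b[(db)^d]^2d=b\,h^3(db)\,d$, which does equal $E_{11}+E_{12}$ in the example. I would restate the conclusion as $(bd)^d=b[(ac)^d]^3(db)d$ and finish exactly along the lines you proposed.
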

\begin{proof} $\Longrightarrow $  Let $aca=a^{'}, b=b^{'}, c=c^{'}$ and $dbd=d^{'}$. Then we have
$$\begin{array}{lll}
(a'c')^2=(ac)^4=(db)^3ac==dbdbacac=(d'b')(a'c'),\\
(d'b')^2=(db)^4=(ac)^2(db)^2=(a'c')(d'b'),\\
b'(a'c')a'=b(ac)^3a=bdbdbaca=b'(d'b')a',\\
c'(a'c')d'=c(acac)dbd=c(db)^3d=c'(d'b')d'.
 \end{array}$$ Since $ac\in \mathcal{A}^{d}$, it follows by~\cite[Theorem 2.7]{J} that $a'c'=(ac)^2\in \mathcal{A}^d$, In light of Theorem 3.2,
 $b'd'=(bd)^2\in \mathcal{A}^d$. Therefore $bd\in \mathcal{A}^d$ by~\cite[Theorem 2.7]{J}. Moreover, we have
$$\begin{array}{lll}
(bd)^{d}&=&[(bd)^2]^{d}bd\\
&=&(b'd')^dbd=b'[(a'c')^d]^2d'bd\\
&=&b[(ac)^d]^4(db)^2d\\
&=&b[(ac)^d]^4(ac)^2d\\
&=&b[(ac)^d]^2d,
\end{array}$$ as required.

$\Longleftarrow $ Since $db\in \mathcal{A}^d$, applying the preceding discussion, we have $ca\in {\Bbb A}^d$.
Therefore $ac\in {\Bbb A}^d$, by using the Cline's formula.\end{proof}

As easy consequences, we now derive

\begin{cor} Let $\mathcal{A}$ be a Banach algebra, and let $a,b,c\in \mathcal{A}$. If $(aba)b=(aca)b, (aba)c=(aca)c$, then $ac\in \mathcal{A}^{d}$ if and only if $ba\in \mathcal{A}^{d}$. In this case, $(ba)^d=b[(ac)^d]^2a$.\end{cor}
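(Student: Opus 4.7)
The plan is to deduce this corollary as a direct specialization of Theorem 3.1, taking the element playing the role of "$d$" there to be $a$. With that substitution, the two product hypotheses $(ac)^2 = (db)(ac)$ and $(db)^2 = (ac)(db)$ of Theorem 3.1 become $(ac)^2 = (ab)(ac)$ and $(ab)^2 = (ac)(ab)$, and the conclusion reads: $ac\in\mathcal{A}^d$ iff $ba\in\mathcal{A}^d$, with $(ba)^d = b[(ac)^d]^2 a$. So essentially all that needs to be shown is that the two assumed identities $(aba)b = (aca)b$ and $(aba)c = (aca)c$ imply the two product identities demanded by Theorem 3.1.

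First I would rewrite $(aba)c = (aca)c$ as $abac = acac$, i.e.\ $(ab)(ac) = (ac)^2$, which is exactly the first needed identity. Similarly, $(aba)b = (aca)b$ rewrites as $abab = acab$, i.e.\ $(ab)^2 = (ac)(ab)$, giving the second. Both product hypotheses of Theorem 3.1 are therefore in place with the choice $d = a$.

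Having verified this, I would simply invoke Theorem 3.1 to conclude both the equivalence $ac\in\mathcal{A}^d \iff ba\in\mathcal{A}^d$ and the formula $(ba)^d = b[(ac)^d]^2 a$ (the latter by substituting $d = a$ into $(bd)^d = b[(ac)^d]^2 d$). No serious obstacle arises; the only minor care needed is notational, to avoid confusion between the element $d$ of Theorem 3.1 and the superscript $d$ denoting the g-Drazin inverse. The point of stating this as a corollary is precisely that, in the Banach algebra setting, the two extra commutator-type hypotheses $b(ac)a = b(db)a$ and $c(ac)d = c(db)d$ used in the ring version (Theorem 2.2, and hence Corollary 2.3) are no longer needed, so only two of the original four identities survive in the present specialization.
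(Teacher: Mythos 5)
Your proposal is correct and is exactly the paper's argument: the paper proves this corollary by setting $d=a$ in Theorem 3.1, and your verification that $(aba)c=(aca)c$ and $(aba)b=(aca)b$ yield $(ac)^2=(ab)(ac)$ and $(ab)^2=(ac)(ab)$ is precisely the (unstated) check the paper's one-line proof relies on.
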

\begin{proof} This is obvious by choosing $d=a$ in Theorem 3.1.\end{proof}

\begin{cor} Let $\mathcal{A}$ be a Banach algebra, and let $a,b,c,d\in \mathcal{A}$ satisfying $$\begin{array}{c}
(ac)^2=(db)(ac),\\
(db)^2=(ac)(db).
\end{array}$$ Then $ac\in \mathcal{A}^D$ if and only if $bd\in \mathcal{A}^D$. In this case, $(bd)^D=b[(ac)^D]^2d$.\end{cor}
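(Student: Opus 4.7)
The plan is to mirror the proof of Theorem~3.1, with g-Drazin invertibility replaced by Drazin invertibility and Theorem~2.5 (the four-condition Drazin ring result) invoked in place of the corresponding g-Drazin theorem. The reduction rests on the same substitution
\[
a' = aca, \quad b' = b, \quad c' = c, \quad d' = dbd,
\]
so that $a'c' = (ac)^{2}$ and $d'b' = (db)^{2}$. The first step is to note that the four ring-level identities of Theorem~2.5, namely $(a'c')^{2} = (d'b')(a'c')$, $(d'b')^{2} = (a'c')(d'b')$, $b'(a'c')a' = b'(d'b')a'$, and $c'(a'c')d' = c'(d'b')d'$, follow from the two given relations by exactly the calculations already performed inside Theorem~3.1; I would simply quote that verification.

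The second step invokes the classical fact that $x \in \mathcal{A}^{D}$ if and only if $x^{2} \in \mathcal{A}^{D}$, with $(x^{2})^{D} = [x^{D}]^{2}$. Assuming $ac \in \mathcal{A}^{D}$, this gives $a'c' = (ac)^{2} \in \mathcal{A}^{D}$, and Theorem~2.5 applied to the primed quadruple yields $b'd' = (bd)^{2} \in \mathcal{A}^{D}$ and hence $bd \in \mathcal{A}^{D}$. The explicit formula $(bd)^{D} = b[(ac)^{D}]^{2}d$ is then extracted as at the end of Theorem~3.1 by expanding
\[
(bd)^{D} = [(bd)^{2}]^{D}(bd) = (b'd')^{D}(bd) = b'[(a'c')^{D}]^{2} d' \cdot bd = b[(ac)^{D}]^{4}(db)^{2}d
\]
and then collapsing $[(ac)^{D}]^{4}(db)^{2}$ using $(db)^{2} = (ac)(db)$ together with the absorption identity $[(ac)^{D}]^{k+1}(ac) = [(ac)^{D}]^{k}$ for $k \geq 1$. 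The reverse implication is the same symmetric detour used in Theorem~3.1: the ordinary Cline formula for the Drazin inverse gives $bd \in \mathcal{A}^{D} \Rightarrow db \in \mathcal{A}^{D}$, the forward direction applied under the hypothesis-preserving relabeling $(a,b,c,d) \mapsto (d,c,b,a)$ yields $ca \in \mathcal{A}^{D}$, and a second application of Cline returns $ac \in \mathcal{A}^{D}$.

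The main obstacle I anticipate is the bookkeeping in the final collapse of $b[(ac)^{D}]^{4}(db)^{2}d$ to $b[(ac)^{D}]^{2}d$, which must be carried out carefully in the Drazin setting. Since it relies only on the two given relations and on properties of the spectral idempotent $(ac)^{D}(ac)$, both identical in the Drazin and g-Drazin cases, no genuinely new Banach-algebraic input beyond Theorem~3.1 is required, and the argument should go through essentially by a word-for-word translation.
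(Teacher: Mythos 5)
Your argument is sound but follows a genuinely different path from the paper's. The paper proves this corollary by staying at the g-Drazin level as long as possible: since $ac\in\mathcal{A}^D\subseteq\mathcal{A}^d$, Theorem 3.1 already yields $bd\in\mathcal{A}^d$ together with $(bd)^d=b[(ac)^d]^2d$, and the upgrade to Drazin invertibility is obtained by repeating the index computation from the four-condition Drazin theorem, namely $[bd-(bd)^2(bd)^d]^{m+2}=b[1-ac(ac)^d](ac)^m dbd=0$ with $m=i(ac)$; the formula then comes for free because $(bd)^D=(bd)^d$. You instead rerun the squaring substitution $a'=aca$, $d'=dbd$ entirely inside the Drazin theory, invoking Theorem 2.5 for the primed quadruple and the classical equivalence $x\in R^D\Leftrightarrow x^2\in R^D$. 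Your route buys something real: that equivalence is a purely ring-theoretic fact, so your proof of the invertibility equivalence does not use the Banach structure at all, whereas the paper's detour through $\mathcal{A}^d$ is exactly where the Banach hypothesis enters. What the paper's route buys is the index bound $i(bd)\le i(ac)+2$ and, more importantly, a painless derivation of the formula. The step you flag as the ``main obstacle'' is indeed not a routine absorption: after one use of $(db)^2=(ac)(db)$ you are left with $b[(ac)^D]^3(db)d$, and since $db$ need not commute with $(ac)^D$ (only $(db)(ac)=(ac)^2$ and $(ac)(db)=(db)^2$ are available), there is no evident further collapse to $b[(ac)^D]^2d$ from the two hypotheses alone --- this is also the delicate point in the final display of Theorem 3.1's own proof. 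You can sidestep it entirely: once $bd\in\mathcal{A}^D$ is established by your ring-level argument, uniqueness gives $(bd)^D=(bd)^d=b[(ac)^d]^2d=b[(ac)^D]^2d$ directly from Theorem 3.1, which is how the paper closes the formula; with that substitution your proof is complete.
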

\begin{proof}  $\Longrightarrow $  Since $ac\in \mathcal{A}^D$, we see that $ac\in \mathcal{A}^d$. By virtue of Theorem 3.1, $bd\in \mathcal{A}^{d}$ and  $(bd)^d=b[(ac)^d]^2d$. Let $m=i(ac)$. AS in the proof of Theorem 2.4, we verify that
$[bd-(bd)^2(bd)^d]^{m+2}=0,$ and so $bd\in \mathcal{A}^d$. Moreover, $(bd)^D=(bd)^d$, as required.

$\Longleftarrow $ This is proved as in Theorem 3.1.\end{proof}

An element $a$ in a Banach algebra $\mathcal{A}$ has p-Drazin inverse provided that there exists
$b\in comm(a)$ such that $b=b^2a, a^k-a^{k+1}b \in \mathcal{A}^{rad}$ for some $k\in {\Bbb N}$. The preceding $b$ is unique if exists, and we denote it by $a^{\ddag}$ (see~\cite{W}). We now derive

\begin{thm} Let $\mathcal{A}$ be a Banach algebra, and let $a,b,c,d\in \mathcal{A}$ satisfying $$\begin{array}{c}
(ac)^2=(db)(ac),\\
(db)^2=(ac)(db).
\end{array}$$ Then $ac\in \mathcal{A}^{\ddag}$ if and only if $bd\in \mathcal{A}^{\ddag}$. In this case, $(bd)^{\ddag}=b[(ac)^{\ddag}]^2d$.\end{thm}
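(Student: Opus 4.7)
My plan is to deduce this p-Drazin statement from Theorem 3.1 in the same way that Corollary 3.3 upgrades Theorem 3.1 from the g-Drazin to the Drazin level, only now the vanishing condition $a^k=a^{k+1}a^D$ is replaced by the weaker radical condition $a^k-a^{k+1}a^{\ddag}\in\mathcal{A}^{rad}$. Since a p-Drazin invertible element is in particular g-Drazin invertible and the g-Drazin inverse is unique, setting $h=(ac)^{\ddag}$ automatically gives $h=(ac)^d$, so Theorem 3.1 immediately supplies $bd\in\mathcal{A}^d$ with $(bd)^d=bh^2d$. This already hands me the commutation property and the identity $bh^2d=(bh^2d)^2(bd)$ required by the p-Drazin definition; the only remaining point is to verify the radical condition.

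For the forward direction, the plan is to reuse the telescoping computation from the proof of Theorem 2.4: using the hypothesis $(db)^2=(ac)(db)$ one checks that $dbd\cdot bd=d(bd)^2=dbdbd=acdbd=ac\cdot dbd$, and a one-step induction then gives
\[
[1-(bd)(bd)^d](bd)^{k+2}=b\bigl[(ac)^k-(ac)^{k+1}(ac)^d\bigr]dbd
\]
for every $k\ge 1$. Choosing $k$ so large that $(ac)^k-(ac)^{k+1}h\in\mathcal{A}^{rad}$, which is possible by definition of the p-Drazin inverse, and using that $\mathcal{A}^{rad}$ is a two-sided ideal, the right-hand side lies in $\mathcal{A}^{rad}$. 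Hence $(bd)^{k+2}-(bd)^{k+3}(bd)^d\in\mathcal{A}^{rad}$, which together with the two properties already supplied by Theorem 3.1 shows that $bh^2d$ is the p-Drazin inverse of $bd$.

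For the converse, I will follow the pattern already used in Theorem 3.1. Standard Cline's formula for the p-Drazin inverse gives $bd\in\mathcal{A}^{\ddag}\Leftrightarrow db\in\mathcal{A}^{\ddag}$. The relabelling $a'=d$, $b'=c$, $c'=b$, $d'=a$ leaves the two displayed hypotheses intact (with the roles of $ac$ and $db$ interchanged), so the forward direction applied to this new data yields $ca=b'd'\in\mathcal{A}^{\ddag}$, and one more application of Cline's formula moves from $ca$ back to $ac$.

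The main obstacle, and the only genuinely computational step, is the induction identity $[1-(bd)(bd)^d](bd)^{k+2}=b[1-(ac)(ac)^d](ac)^k dbd$; once this is established, everything else is bookkeeping on top of Theorem 3.1 and the ideal property of $\mathcal{A}^{rad}$. It is worth emphasising that this identity uses only the two equations $(ac)^2=(db)(ac)$ and $(db)^2=(ac)(db)$, which is what makes the section's weaker Banach-algebra hypothesis sufficient.
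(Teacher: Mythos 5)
Your proposal is correct and follows essentially the same route as the paper: both reduce to Theorem 3.1 for the g-Drazin part, establish the identity $[1-(bd)(bd)^d](bd)^{k+2}=b[1-(ac)(ac)^d](ac)^k\,dbd$ by the same induction from $(db)^2=(ac)(db)$, invoke the ideal property of $\mathcal{A}^{rad}$, and handle the converse via Cline's formula for p-Drazin inverses together with the relabelling that swaps the roles of $ac$ and $db$.
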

\begin{proof} $\Longrightarrow $  Since $ac\in \mathcal{A}^{\ddag}$, we have $ac\in \mathcal{A}^d$. In light of Theorem 3.1, $bd\in \mathcal{A}^{d}$ and  $(bd)^d=b[(ac)^d]^2d$. One easily checks that
$$\begin{array}{lll}
1-(bd)(bd)^{d}&=&1-b(dbac)[(ac)^d]^3d\\
&=&1-b(ac)^2[(ac)^d]^3d\\
&=&1-b(ac)^dd.
\end{array}$$ Assume that $[ac-(ac)^2(ac)^{\ddag}]^k\in \mathcal{A}^{rad}$.
Then $$[1-(bd)(bd)^{d}](bd)^3=b[1-ac(ac)^d]ac(dbd).$$ By induction,
we have $$\begin{array}{lll}
[bd-(bd)^2(bd)^d]^{m+2}&=&[1-(bd)(bd)^{d}](bd)^{m+2}\\
&=&b[1-ac(ac)^d](ac)^m(dbd)\\
&=&b[ac-ac^2(ac)^d]^m(dbd)\\
&\in &\mathcal{A}^{rad}.
\end{array}$$ Therefore $$(bd)^{\ddag}=b[(ac)^{\ddag}]^2d,$$ as asserted.

$\Longleftarrow $ By virtue of~\cite[Theorem 3.6]{W}, $db\in \mathcal{A}^{\ddag}$. Then we have $ca\in \mathcal{A}^{\ddag}$ by the discussion above.
So the theorem is true by ~\cite[Theorem 3.6]{W}.\end{proof}

\begin{cor} Let $\mathcal{A}$ be a Banach algebra, and let $a,b,c\in \mathcal{A}$. If $(aba)b=(aca)b, (aba)c=(aca)c$, then $ac\in \mathcal{A}^{\ddag}$ if and only if $ba\in \mathcal{A}^{\ddag}$. In this case, $(ba)^{\ddag}=b[(ac)^{\ddag}]^2a$:\end{cor}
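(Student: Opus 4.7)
The plan is to derive this corollary as an immediate specialization of Theorem 3.4 by taking $d=a$. In other words, I would apply the four-variable Banach-algebra result to the quadruple $(a,b,c,a)$ rather than to a genuinely new fourth element. This mirrors exactly the way Corollary 3.2 is obtained from Theorem 3.1 in the g-Drazin setting, so the strategy is dictated by the pattern already established in Section 3.

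First I would verify that the hypotheses of Theorem 3.4 hold under this specialization. With $d=a$, the two required identities
\begin{equation*}
(ac)^2=(db)(ac), \qquad (db)^2=(ac)(db)
\end{equation*}
become $acac=abac$ and $abab=acab$, and these are precisely the given assumptions $(aba)c=(aca)c$ and $(aba)b=(aca)b$ read with a different grouping of factors. With the hypotheses of Theorem 3.4 in hand, that theorem delivers at once the equivalence $ac\in\mathcal{A}^{\ddag}\Longleftrightarrow bd\in\mathcal{A}^{\ddag}$ together with the formula $(bd)^{\ddag}=b[(ac)^{\ddag}]^2d$; substituting $d=a$ gives the desired statement $(ba)^{\ddag}=b[(ac)^{\ddag}]^2a$.

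I expect essentially no obstacle here: the argument is a pure specialization and requires no new calculation beyond the trivial re-parenthesization above. The only point worth emphasizing is that the ring-theoretic cancellation conditions $b(ac)a=b(db)a$ and $c(ac)d=c(db)d$ from Theorem 3.2 are not imposed in this corollary, and do not need to be checked, precisely because they have already been removed in the passage from Theorem 3.2 to Theorem 3.4 via the Banach-algebra technique used in Section 3.
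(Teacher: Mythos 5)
Your proposal is correct and is exactly the paper's proof: the paper also obtains this corollary by setting $d=a$ in the four-element p-Drazin theorem for Banach algebras, and your verification that $(ac)^2=(ab)(ac)$ and $(ab)^2=(ac)(ab)$ are just regroupings of $(aba)c=(aca)c$ and $(aba)b=(aca)b$ is the only (trivial) check needed.
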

\begin{proof} This is obvious by choosing $d=a$ in Theorem 3.5.\end{proof}

The following example illustrates that Theorem 3.4 is not a trivial generalization of~\cite[Theorem 4.1]{Mi}.

The following example illustrates that Theorem 3.4 is not a trivial generalization of~\cite[Theorem 4.1]{Mi}.

\begin{exam}\end{exam} Let $\mathcal{A}=M_4({\Bbb C})$. Choose $$a=b=c=\left(
\begin{array}{cccc}
0&1&0&0\\
0&0&1&0\\
0&0&0&1\\
0&0&0&0
\end{array}
\right), d=\left(
\begin{array}{cccc}
0&2&0&0\\
0&0&1&0\\
0&0&0&1\\
0&0&0&0
\end{array}
\right)
\in M_4({\Bbb C}).$$ Then $$\begin{array}{c}
(ac)^2=(db)(ac)=0,\\
(db)^2=(ac)(db)=0.
\end{array}$$ We see that  $ac$ and $bd$  are nilpotent matrices and so have p-Drazin inverses. In this case, $$acd=\left(\begin{array}{cccc}
0&0&1&0\\
0&0&0&0\\
0&0&0&0\\
0&0&0&0
\end{array}
\right)\neq \left(\begin{array}{cccc}
0&0&2&0\\
0&0&0&0\\
0&0&0&0\\
0&0&0&0
\end{array}
\right)=dbd.$$

\section{Applications}

Let $X$ be  Banach space, and let $\mathcal{L}(X)$ denote the set of all bounded linear operators from Banach space to itself, and let $a\in \mathcal{L}(X)$. The Drazin spectrum $\sigma_D(a)$ and g-Drazin spectrum $\sigma_{d}(a)$ are defined by $$\begin{array}{c}
\sigma_D(a)=\{ \lambda\in {\Bbb C}~|~\lambda-a\not\in A^{D}\};\\
\sigma_{d}(a)=\{ \lambda\in {\Bbb C}~|~\lambda-a\not\in A^{d}\}.
\end{array}$$ For the further use, we now record the following.

\begin{lem} Let $R$ be a ring, and let $a,b,c,d\in R$ satisfying $$\begin{array}{c}
(ac)^2=(db)(ac), (db)^2=(ac)(db);\\
b(ac)a=b(db)a, c(ac)d=c(db)d.
\end{array}$$ Then $1-ac\in R^{-1}$ if and only if $1-bd\in R^{-1}$. In this case,
$$(1-bd)^{-1}=[1-b(1-ac)^{-1}(acd-dbd)][1+b(1-ac)^{-1}d].$$
\end{lem}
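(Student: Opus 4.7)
The plan is to verify the explicit formula for $(1-bd)^{-1}$ by direct computation. Set $u=(1-ac)^{-1}$ and $Q=bu(acd-dbd)=bu(ac-db)d$; write the candidate inverse as $w=(1-Q)(1+bud)$, as in the statement. The manipulations rest on three preparatory identities: $acu=uac=u-1$ (from $u(1-ac)=(1-ac)u=1$), the telescoped form $(ac)^2u=u-1-ac$, and $(db)u=db+(ac)^2u$, the last coming from the hypothesis $(db)(1-ac)=db-(ac)^2$.

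The decisive cancellation is $(ac-db)(db)=(ac)(db)-(db)^2=0$, which uses only the hypothesis $(db)^2=(ac)(db)$. From it one gets at once $Q^2=bu(ac-db)(db)u(ac-db)d=0$ and $Q\cdot bud=0$, so $w=(1-Q)(1+bud)=1+bud-Q$. On the other hand, direct expansion gives $(1+bud)(1-bd)=1+bud-bd-bu(db)d$, which one recognises as $1+Q$ because $Q=b(uac)d-bu(db)d=b(u-1)d-bu(db)d=bud-bd-bu(db)d$. Therefore $w(1-bd)=(1-Q)(1+Q)=1-Q^2=1$.

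To check $(1-bd)w=1$, I would first simplify $w$ to $w=1+bd+bu(dbd)$ using $b(uac)d=bud-bd$. Expanding, $(1-bd)w=1+bu(dbd)-(bd)^2-bd\cdot bu(dbd)$, so everything reduces to $bd\cdot bu(dbd)=b[(db)u](dbd)=b(db)(dbd)+b(ac)^2u(dbd)$ via $(db)u=db+(ac)^2u$. The first summand $b(db)(dbd)=(bd)^3$ is rewritten through $(db)^2=(ac)(db)$ and then $c(db)d=c(ac)d$ as $(bd)^3=b(db)^2d=b(ac)(db)d=ba\cdot c(db)d=ba\cdot c(ac)d=b(ac)^2d$. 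The second summand, after $(ac)^2u=u-1-ac$, equals $bu(dbd)-(bd)^2-b(ac)^2d$; the two $b(ac)^2d$ cancel, yielding $bd\cdot bu(dbd)=bu(dbd)-(bd)^2$, and hence $(1-bd)w=1$.

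For the converse, the hypotheses are invariant under the substitution $a\leftrightarrow d$, $b\leftrightarrow c$, which interchanges $ac$ with $db$ and $bd$ with $ca$. Combined with the classical Jacobson lemma for units, $1-bd\in R^{-1}$ implies successively $1-db\in R^{-1}$, then (by the forward direction applied to the transposed data) $1-ca\in R^{-1}$, and finally $1-ac\in R^{-1}$. The main obstacle, I expect, is the identity $(bd)^3=b(ac)^2d$ inside the left-inverse verification: it is the single place where the condition $c(ac)d=c(db)d$ must be used, and one has to first apply $(db)^2=(ac)(db)$ to expose a $c(db)d$ factor before that substitution can be triggered.
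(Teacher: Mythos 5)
Your proof is correct and follows essentially the same route as the paper: a direct verification of the displayed formula resting on the cancellation $(ac-db)(db)=(ac)(db)-(db)^2=0$ for the left-inverse half, on $(ac)^2=(db)(ac)$ together with $cacd=cdbd$ for the right-inverse half, and on the classical Jacobson lemma plus the symmetry $a\leftrightarrow d$, $b\leftrightarrow c$ for the converse. The only organizational difference is that you verify that the single element $w=(1-Q)(1+bud)$ is a two-sided inverse, whereas the paper separately exhibits the right-inverse candidate $(1+bd+bacsd)[1+b(db-ac)sd]$ and then identifies the two; your version is marginally cleaner but uses the same hypotheses in the same places.
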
\begin{proof} $\Longrightarrow$ Let $s=(1-ac)^{-1}$. Then
$s(1-ac)=1$, and so $1-s=-sac$. We check that
$$\begin{array}{lll}
(1+bsd)(1-bd)&=&1-b(1-s)d-bsdbd\\
&=&1+bsacd-bsdbd\\
&=&1+bs(acd-dbd).
\end{array}$$
Hence, $$\begin{array}{ll}
&[1-bs(acd-dbd)](1+bsd)(1-bd)\\
=&1-b(1-s)d-bsdbd\\
=&1+bsacd-bsdbd\\
=&1-bs(acd-dbd)bs(acd-dbd)\\
=&1-bs(acd-dbd)b(1+sac)(acd-dbd)\\
=&1-bs(ac-db)db(acd-dbd)\\
=&1.
\end{array}$$ Also we check that
$$\begin{array}{lll}
(1-bd)(1+bd+bacsd)&=&1-bdbd+b(1-db)acsd\\
&=&1-b[db(1-ac)-(1-db)ac]sd\\
&=&1-b(db-ac)sd;
\end{array}$$ hence, we have
$$\begin{array}{ll}
&(1-bd)(1+bd+bacsd)[1+b(db-ac)sd]\\
=&1-b(db-ac)sdb(db-ac)sd\\
=&1-b(db-ac)sdb(db-ac)(1+acs)d\\
=&1-b(db-ac)sdb(db-ac)d\\
=&1.
\end{array}$$ That is, $1-bd$ is right and left invertible. Therefore $$(1-bd)^{-1}=[1-bs(acd-dbd)](1+bsd),$$ as desired.

$\Longleftarrow$ In light of ~\cite[Theorem 2.1]{Mi}, $1-db\in R^{-1}$. Applying the discussion above, we see that $1-ca\in R^{-1}$.
By using~\cite[Theorem 2.1]{Mi} again, $1-ac\in R^{-1}$, as asserted.\end{proof}

We have at our disposal all the information necessary to prove the following.

\begin{thm} Let $A,B,C,D\in \mathcal{L}(X)$ such that $$\begin{array}{c}
(AC)^2=(DB)(AC), (DB)^2=(AC)(DB);\\
B(AC)A=B(DB)A, C(AC)D=A(DB)D.
\end{array}$$ then $$\sigma_d(BD)=\sigma_d(AC).$$\end{thm}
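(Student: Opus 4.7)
The plan is to reduce the claim to Lemma 4.1 together with Koliha's description of the g-Drazin spectrum, namely $\sigma_d(T)=\operatorname{acc}(\sigma(T))$ for any $T\in\mathcal{L}(X)$. (This is standard: $T\in\mathcal{L}(X)^d$ iff $0$ is not an accumulation point of $\sigma(T)$, and replacing $T$ by $\lambda-T$ shifts the spectrum by $\lambda$.) With this characterization in hand, since accumulation points of a subset $S\subseteq\mathbb{C}$ are unaffected by removing any single point, it suffices to prove
\[
\sigma(AC)\setminus\{0\}=\sigma(BD)\setminus\{0\}.
\]

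The key observation is that each of the four hypotheses is jointly homogeneous of degree two in the pair $(A,D)$: the variable $A$ appears always paired with $C$ and the variable $D$ always paired with $B$, and in every identity the total number of occurrences of $\{A,D\}$ on each side is exactly two. Consequently, for any $\lambda\in\mathbb{C}\setminus\{0\}$, the rescaled quadruple $(A',B',C',D'):=(\lambda^{-1}A,\,B,\,C,\,\lambda^{-1}D)$ again satisfies the four relations (every term is multiplied by the same factor $\lambda^{-2}$). Apply Lemma 4.1 to $(A',B',C',D')$: one obtains
\[
1-\lambda^{-1}AC=1-A'C'\in\mathcal{L}(X)^{-1}\iff 1-B'D'=1-\lambda^{-1}BD\in\mathcal{L}(X)^{-1}.
\]
Multiplying by $\lambda$ shows $\lambda-AC$ is invertible iff $\lambda-BD$ is invertible, which gives the desired equality $\sigma(AC)\setminus\{0\}=\sigma(BD)\setminus\{0\}$.

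To finish, fix $\lambda\in\mathbb{C}$. By the above, $\lambda\in\operatorname{acc}(\sigma(AC))$ iff $\lambda$ has points of $\sigma(AC)\setminus\{\lambda\}$ arbitrarily close to it, which (since the two spectra differ in at most the single point $0$) is equivalent to $\lambda\in\operatorname{acc}(\sigma(BD))$. Invoking $\sigma_d(T)=\operatorname{acc}(\sigma(T))$ yields $\sigma_d(AC)=\sigma_d(BD)$. As a sanity check at the possibly exceptional point $\lambda=0$, Theorem 3.2 gives $AC\in\mathcal{L}(X)^d\iff BD\in\mathcal{L}(X)^d$, so $0\in\sigma_d(AC)\iff 0\in\sigma_d(BD)$ directly.

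The main obstacle is essentially bookkeeping: one must notice the joint $(A,D)$-homogeneity of degree two, which is what makes the scaling trick legal and lets Lemma 4.1 be applied to $\lambda-AC$ and $\lambda-BD$ for all nonzero $\lambda$. Beyond that, the only non-trivial input is the Koliha characterization $\sigma_d(T)=\operatorname{acc}(\sigma(T))$, which is not proved in the paper but is classical and invoked implicitly when $\sigma_d$ is discussed.
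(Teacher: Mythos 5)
Your proof is correct and follows essentially the same route as the paper: the point $0$ is handled by the main g-Drazin theorem, and nonzero $\lambda$ are handled by applying Lemma 4.1 to the rescaled quadruple $(\lambda^{-1}A,B,C,\lambda^{-1}D)$ together with the characterization $\sigma_d(T)=\operatorname{acc}\sigma(T)$. The only difference is that you explicitly verify (via the degree-two homogeneity in $(A,D)$) that the rescaled operators still satisfy the hypotheses, a step the paper leaves implicit.
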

\begin{proof} Case 1. $0\in \sigma_d(BD)$. Then $BD\not\in A^{d}$. In view of Theorem 2.2, $AC\not\in A^{d}$. Thus $0\in \sigma_d(AC)$.

Case 2. $0\not\in \lambda\in\sigma_d(BD)$. Then $\lambda\in acc\sigma(BD)$; hence,
$$\lambda=\lim\limits_{n\to \infty}\{ \lambda_n ~|~ \lambda_n I-BD\not\in \mathcal{L}(X)^{-1}\}.$$
For $\lambda_n\neq 0$, we have $I-(\frac{1}{\lambda_n} A)C\in \mathcal{L}(X)^{-1}$. In light of Lemma 4.1, $I-B(\frac{1}{\lambda_n}D)\in \mathcal{L}(X)^{-1}$, and then
$$\lambda=\lim\limits_{n\to \infty}\{ \lambda_n ~|~ \lambda_n I-AC\not\in \mathcal{L}(X)^{-1}\}\in acc\sigma(AC)=\sigma_d(AC).$$
Therefore $\sigma_d(BD)\subseteq \sigma_d(AC).$ Analogously, we have $\sigma_d(AC)\subseteq \sigma_d(BD),$ the result follows.\end{proof}

\begin{cor} Let $A, B, C\in \mathcal{L}(X)$. If $(ABA)B=(ACA)B, (AB$ $A)C=(ACA)C$, then $$\sigma_d(AC)=\sigma_d(BA).$$\end{cor}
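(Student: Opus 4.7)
The plan is to deduce Corollary~4.3 by specializing Theorem~4.2 to $D=A$. With this choice $BD$ becomes $BA$ and $DB$ becomes $AB$, so Theorem~4.2's conclusion $\sigma_d(BD)=\sigma_d(AC)$ is exactly the desired equality $\sigma_d(AC)=\sigma_d(BA)$.

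My first step is to translate the two hypotheses of the corollary into the relations required by Theorem~4.2 at $D=A$. Expanding $(ABA)C=(ACA)C$ gives $ABAC=ACAC$, i.e.\ $(AC)^2=(AB)(AC)$, the first required relation. Similarly $(ABA)B=(ACA)B$ gives $(AB)^2=(AC)(AB)$, the second. These two are precisely the hypotheses of the Banach-algebra result Theorem~3.1 with $d=a$.

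The main obstacle is that Theorem~4.2 is stated with two further relations, $B(AC)A=B(DB)A$ and $C(AC)D=C(DB)D$, which at $D=A$ read $BACA=BABA$ and $CACA=CABA$; neither formally follows from the corollary's two identities. The plan is to bypass them by observing that the two established relations already force $N:=AC-AB$ to satisfy $N\cdot AC=N\cdot AB=0$, and hence $N^2=0$. For $\lambda\ne 0$, the equality $N(\lambda I-AC)=\lambda N$ produces the explicit inverse $(\lambda I-AB)^{-1}=(\lambda I-AC)^{-1}(I-\lambda^{-1}N)$, so $\lambda I-AC$ is invertible if and only if $\lambda I-AB$ is; the classical Jacobson identity $\sigma(AB)\setminus\{0\}=\sigma(BA)\setminus\{0\}$ then gives $\sigma_d(AC)\setminus\{0\}=\sigma_d(BA)\setminus\{0\}$. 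At $\lambda=0$, Corollary~3.2 (which only needs the two given identities) yields $AC\in\mathcal{L}(X)^d\iff BA\in\mathcal{L}(X)^d$, i.e.\ $0\in\sigma_d(AC)\iff 0\in\sigma_d(BA)$. Combining the two cases gives $\sigma_d(AC)=\sigma_d(BA)$.
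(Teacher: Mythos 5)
Your proof is correct, but it takes a genuinely different route from the paper's, and in fact it repairs a real defect in the paper's one-line argument. The paper simply says ``choose $D=A$ in Theorem 4.2''; however, Theorem 4.2 also assumes $B(AC)A=B(DB)A$ and $C(AC)D=C(DB)D$, which at $D=A$ read $BACA=BABA$ and $CACA=CABA$, and, as you observe, neither follows from the corollary's two hypotheses $(ABA)B=(ACA)B$ and $(ABA)C=(ACA)C$ (compare Corollary 2.3, where the ring-theoretic version of this corollary is stated with all four identities). Your detour is exactly what is needed: the two given identities yield $(AC)^2=(AB)(AC)$ and $(AB)^2=(AC)(AB)$, so $N=AC-AB$ kills both $AC$ and $AB$ on the right, whence $N^2=0$ and $N(\lambda I-AC)^{-1}=\lambda^{-1}N$; this gives the explicit two-sided inverse $(\lambda I-AB)^{-1}=(\lambda I-AC)^{-1}(I-\lambda^{-1}N)$ and hence $\sigma(AC)\setminus\{0\}=\sigma(AB)\setminus\{0\}=\sigma(BA)\setminus\{0\}$, which transfers to nonzero accumulation points and so to $\sigma_d$ away from $0$ (using $\sigma_d(T)=\operatorname{acc}\sigma(T)$, exactly the mechanism the paper uses inside Theorem 4.2). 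The point $0$ is correctly handled by Corollary 3.2, which needs only the two given identities because it rests on the Banach-algebra Theorem 3.1. The only stylistic remark is that your formula proves one implication of the invertibility equivalence explicitly; you should note (as you implicitly do) that the converse follows symmetrically from $N\cdot AB=0$. Net effect: your argument is self-contained where the paper's citation of Theorem 4.2 is not, at the cost of re-deriving a resolvent identity that the paper would have obtained from Lemma 4.1 had its hypotheses been available.
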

\begin{proof} By choosing $D=A$ in Theorem 4.2, we complete the proof.\end{proof}

For the Drazin spectrum $\sigma_D(a)$, we now derive

\begin{thm} Let $A,B,C,D\in \mathcal{L}(X)$ such that $$\begin{array}{c}
(AC)^2=(DB)(AC), (DB)^2=(AC)(DB);\\
B(AC)A=B(DB)A, C(AC)D=A(DB)D.
\end{array}$$ then $$\sigma_D(BD)=\sigma_D(AC).$$\end{thm}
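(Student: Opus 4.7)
The plan is to parallel the two-case structure of Theorem 4.2, substituting the Drazin-invertibility results for their g-Drazin counterparts. First I would handle $\lambda=0$ directly: since the given hypotheses coincide with those of Theorem 2.5, that theorem yields $AC\in\mathcal{L}(X)^{D}$ if and only if $BD\in\mathcal{L}(X)^{D}$, so $0\in\sigma_D(AC)$ if and only if $0\in\sigma_D(BD)$.

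For $\lambda\neq 0$, the key observation is that each of the four hypothesis equations is homogeneous of degree $2$ in the pair $(A,D)$, so simultaneously rescaling $A\mapsto A/\lambda$ and $D\mapsto D/\lambda$ (keeping $B$ and $C$ fixed) preserves all four. Writing
\[
\lambda I - AC = \lambda\bigl(I-(A/\lambda)C\bigr),\qquad \lambda I - BD = \lambda\bigl(I-B(D/\lambda)\bigr),
\]
the question reduces to a shifted Drazin analog of Lemma 4.1: under the hypotheses of the theorem, $I-AC$ is Drazin invertible if and only if $I-BD$ is. Granting this, $\lambda I - AC \in\mathcal{L}(X)^{D}$ iff $\lambda I - BD\in\mathcal{L}(X)^{D}$, and combined with the previous paragraph this yields $\sigma_D(AC)=\sigma_D(BD)$.

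The main obstacle is establishing the shifted Drazin analog of Lemma 4.1. The plan is to build an explicit Drazin inverse for $I-BD$ from a Drazin inverse of $I-AC$, by combining the inverse formula in Lemma 4.1 (which handles the case when the quasinilpotent remainder vanishes) with the Drazin-inverse formula $(bd)^{D}=b((ac)^{D})^{2}d$ of Theorem 2.5. The nilpotency of the quasinilpotent remainder would be carried across by an analog of the index bound $i(bd)\leq i(ac)+2$ from Theorem 2.5, exactly in the spirit of the induction used in the proof of Corollary 3.3. The reverse implication follows from the symmetry of the hypotheses under the swap $(A,C)\leftrightarrow(D,B)$, mirroring the ``$\Longleftarrow$'' direction in Theorem 2.5. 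The resulting computation is routine but bookkeeping-heavy; I would model it on the three-step construction in the proof of Theorem 2.2 and on the iterated-reduction argument used in Theorem 2.5.
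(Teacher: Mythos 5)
Your treatment of $\lambda=0$ coincides with the paper's: both invoke Theorem 2.5 to transfer Drazin invertibility between $AC$ and $BD$. For $\lambda\neq 0$ the routes diverge. The paper simply cites an external result (\cite[Theorem 3.1]{Y}) to handle the nonzero part of the Drazin spectrum, whereas you propose to prove, in-house, a ``Jacobson's lemma for Drazin inverses'': under the stated hypotheses, $I-AC\in\mathcal{L}(X)^{D}$ if and only if $I-BD\in\mathcal{L}(X)^{D}$. Your reduction to that statement is sound --- the four hypothesis identities are indeed homogeneous of degree $2$ under $A\mapsto A/\lambda$, $D\mapsto D/\lambda$, so the rescaling argument (the same one the paper uses in Theorem 4.2 for the ordinary spectrum) is legitimate.

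The gap is that the key lemma is asserted rather than proved, and the tools you propose to prove it with do not reach it. Theorem 2.5 and the index bound $i(bd)\leq i(ac)+2$ are Cline-type statements: they relate $(BD)^{D}$ to $(AC)^{D}$, i.e.\ they control the part of the spectrum near $0$. They say nothing about $(I-BD)^{D}$ versus $(I-AC)^{D}$, which is governed by the part of the spectrum near $1$; the ``quasinilpotent remainder'' of $I-BD$ is supported on the spectral subspace of $BD$ associated with a neighbourhood of $1$, and no analogue of the index bound for $BD$ transfers to it. Lemma 4.1 only settles the honestly invertible case. Passing from Cline's formula to Jacobson's lemma at the level of (g-)Drazin inverses is a genuinely separate and historically nontrivial problem --- even in the classical two-element case the equivalence of Drazin invertibility of $1-ab$ and $1-ba$ required new arguments (spectral idempotent decompositions, not a one-line formula), and the known explicit formulas for $(1-ba)^{D}$ in terms of $(1-ab)^{D}$ are not obtained by the three-step construction of Theorem 2.2. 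So the step you label ``routine but bookkeeping-heavy'' is in fact the entire content of the nonzero-$\lambda$ case, and your plan as written would not close it; you would need either to prove the shifted Jacobson-type lemma for this four-element setting from scratch, or to do what the paper does and quote a result of the Yang--Fang type.
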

\begin{proof} By virtue of Theorem 2.5, $AC\in \mathcal{L}(X)^{D}$ implies that $BD\in \mathcal{L}(X)^{D}$. This completes the proof
by~\cite[Theorem 3.1]{Y}.\end{proof}

A bounded linear operator $T\in \mathcal{L}(X)$ is Fredholm operator if $dimN(T)$ and $codimR(T)$ are finite, where $N(T)$ and $R(T)$ are the null space and the range of $T$ respectively. For each nonnegative integer $n$ define $T_{|n|}$ to be the restriction of $T$ to $R(T^n)$. If for some $n$, $R(T^n)$ is closed and $T_{|n|}$ is a Fredholm operator then $T$ is called a B-Fredholm operator. The B-Fredholm of $T$ are defined by
$$\sigma_{BF}(T)=\{ \lambda\in {\Bbb C}~|~T-\lambda I~\mbox{is not a B-Fredholm operator}\}.$$

\begin{cor} Let $A,B,C\in \mathcal{L}(X)$ such that $$(ABA)B=(ACA)B, (ABA)C=(ACA)C,$$ then $$\sigma_{BF}(AC)=\sigma_{BF}(BA).$$\end{cor}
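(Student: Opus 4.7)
My plan is to reduce the statement to a Drazin-spectrum identity inside the Calkin algebra. Berkani's characterization of B-Fredholm operators says that $T \in \mathcal{L}(X)$ is B-Fredholm if and only if its canonical image $\pi(T)$ in the Calkin algebra $\mathcal{C}(X) = \mathcal{L}(X)/K(X)$ is Drazin invertible; hence $\sigma_{BF}(T) = \sigma_D(\pi(T))$, the Drazin spectrum being computed in the Banach algebra $\mathcal{C}(X)$. It therefore suffices to establish $\sigma_D(\pi(AC)) = \sigma_D(\pi(BA))$ in $\mathcal{C}(X)$.

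Set $a = \pi(A)$, $b = \pi(B)$, $c = \pi(C)$ and $d = a$. Since $\pi$ is a ring homomorphism, the two hypotheses $(ABA)B = (ACA)B$ and $(ABA)C = (ACA)C$ descend to $(db)^2 = (ac)(db)$ and $(ac)^2 = (db)(ac)$ in $\mathcal{C}(X)$, which are exactly the conditions of Corollary 3.3 for the Banach algebra $\mathcal{C}(X)$. That corollary yields the equivalence $\pi(AC) \in \mathcal{C}(X)^D \Leftrightarrow \pi(BA) \in \mathcal{C}(X)^D$, i.e.\ the spectral statement at $\lambda = 0$.

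For $\lambda \neq 0$ I would rescale, following the pattern in the proof of Theorem 4.4. Both hypotheses are quadratic in $A$, so they are preserved when $A$ is replaced by $\lambda^{-1} A$, and $\lambda I - \pi(AC) \in \mathcal{C}(X)^D$ iff $I - \pi(\lambda^{-1} AC) \in \mathcal{C}(X)^D$ (and similarly on the $\pi(BA)$ side). The remaining step, namely that $I - \pi(\lambda^{-1} AC)$ and $I - \pi(\lambda^{-1} BA)$ are simultaneously Drazin invertible in $\mathcal{C}(X)$, is the Drazin analog of Lemma 4.1 in the weaker Banach-algebra setting, which is precisely the external principle \cite[Theorem 3.1]{Y} invoked already in the proof of Theorem 4.4, now applied in $\mathcal{C}(X)$. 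This last upgrade is the main obstacle: Corollary 3.3 only addresses the point $\lambda = 0$, and extending the equivalence to every $\lambda$ under merely the two-condition Banach-algebra hypotheses is the content of the Banach-algebra Drazin Jacobson lemma. Once that is granted, the claim follows from Berkani's identification.
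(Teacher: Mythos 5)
Your overall strategy is the same as the paper's: pass to a quotient in which B-Fredholmness becomes Drazin invertibility, check that the two hypotheses descend to the quotient, and invoke the coincidence of Drazin spectra there (at $\lambda=0$ via the Cline-type theorem, at $\lambda\neq 0$ via the external principle \cite[Theorem 3.1]{Y}, whose unproved status you candidly flag, just as the paper silently relies on it in Theorem 4.4). However, your reduction starts from a false identification of the ambient algebra. Berkani's characterization of B-Fredholm operators is that $T$ is B-Fredholm if and only if its image is Drazin invertible in $\mathcal{L}(X)/F(X)$, where $F(X)$ is the ideal of \emph{finite-rank} operators --- this is the quotient the paper uses --- and not in the Calkin algebra $\mathcal{L}(X)/K(X)$. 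The two are not interchangeable: a compact quasinilpotent weighted shift $T$ with all weights nonzero has $\pi(T)=0$, hence Drazin invertible, in the Calkin algebra, yet no power of $T$ has closed range, so $T$ is not B-Fredholm. Thus $\sigma_{BF}(T)\neq\sigma_D(\pi(T))$ for the Calkin projection, and the first displayed equality on which your whole argument rests fails.

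Repairing this by replacing $K(X)$ with $F(X)$ creates a second problem for your route: $F(X)$ is not norm-closed, so $\mathcal{L}(X)/F(X)$ is only a ring, not a Banach algebra, and Corollary 3.3 --- the result that lets you get away with just the two right-multiplied conditions $(aba)b=(aca)b$ and $(aba)c=(aca)c$ --- is no longer applicable. In a bare ring one must fall back on Theorem 2.5 (equivalently Corollary 2.6), which also requires the left-multiplied conditions $b(aba)=b(aca)$ and $c(aba)=c(aca)$; these are not among the hypotheses of the corollary and do not obviously descend from them. To be fair, the paper's own one-line proof, which applies Theorem 4.4 to elements of $\mathcal{L}(X)/F(X)$, is exposed to exactly the same objection, so your proposal faithfully reproduces the paper's architecture and its gaps --- but it adds a new and genuine error in the choice of ideal.
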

\begin{proof} Let $\pi: \mathcal{L}(X)\to \mathcal{L}(X)/F(X)$ be the canonical map and $F(X)$ be the ideal of finite rank operators in $\mathcal{L}(X)$. As is well known, $T\in \mathcal{L}(X)$ is $B$-Fredholm if and only if $\pi(T)$ has Drazin inverse. By assumption, we have
$$\begin{array}{c}
\pi(A)\pi(B)\pi(A)\pi(B)=\pi(A)\pi(C)\pi(A)\pi(B),\\
\pi(A)\pi(B)\pi(A)\pi(C)=\pi(A)\pi(C)\pi(A)\pi(C).
\end{array}$$ The corollary is therefore established by Theorem 4.4.\end{proof}

\vskip10mm

\end{document}